\numberwithin{equation}{section}
\newtheorem{theorem}{Theorem}[section]
\newtheorem*{theoMPSC}{Theorem}
\newtheorem{proposition}[theorem]{Proposition}
\newtheorem{lemma}[theorem]{Lemma}
\newtheorem{remark}[theorem]{Remark}
\newtheorem{corollary}[theorem]{Corollary}
\newtheorem{definition}[theorem]{Definition}
\theoremstyle{definition}
\renewcommand{\epsilon}{\eps}
\newcommand{\N}{{\mathbb N}}
\newcommand{\R}{{\mathbb R}}
\newcommand{\G}{{\mathbb{G}}}
\newcommand{\eps}{\varepsilon}
\newcommand{\pnorm}[2][]{\if #1'' \left|#2\right|_p \else \left|#2\right|_{#1} \fi}
\newcommand{\scal}[2]{\langle {#1} , {#2}\rangle}
\renewcommand{\theta}{\vartheta}
\def\cleardoublepage{\clearpage\if@twoside \ifodd\c@page\else
\hbox{}
\thispagestyle{empty}
\newpage
\if@twocolumn\hbox{}\newpage\fi\fi\fi}
\title{Integral representation of local left--invariant functionals in Carnot groups}
\author{A. Maione}
\address{Alberto Maione: Dipartimento di Matematica\\Universit\`a di Trento\\ Via Sommarive 14\\ 38123, Povo (Trento) - Italy\\}
\email{alberto.maione@unitn.it}
\thanks{A.M. is supported by MIUR, Italy, GNAMPA of INDAM and University of Trento, Italy.}
\date{\today}
\author{E. Vecchi}
\address{Eugenio Vecchi: Dipartimento di Matematica\\Politecnico di Milano\\ Via Edoardo Bonardi 13\\ 20133, Milano - Italy\\}
\email{eugenio.vecchi@polimi.it}
\subjclass[2010]
{ 49J45,  
  49N99,  
  49Q99.  
}
\begin{document}

\begin{abstract}
The aim of this note is to prove a representation theorem for left--invariant
functionals in Carnot groups. As a direct consequence, we can also provide
a $\Gamma$-convergence result for a smaller class of functionals.
\end{abstract}

\keywords{Integral representation, Carnot groups, $\Gamma$-convergence}

\maketitle

\section{Introduction}
The representation of local functionals as integral functionals has a very long history and
exhibits a natural application when dealing with relaxed functionals and $\Gamma$-limits
in a suitable topology. 
In the Euclidean case this problem is very well understood and we refer the reader to the
papers \cite{BDM,BDM2,Alberti} as well as the classical monographs \cite{DM,B,Bra} and the references therein.\\
The same problem may be faced when dealing with abstract functionals defined on Sobolev spaces
built starting from a family of vector fields satisfying certain natural conditions. 
This is the starting point of the recent paper \cite{MPSC} 
where the authors started the study of very general functionals proving, among many other results,
that they can be represented as integral functionals whose integrand depends
on a gradient modeled on a family of vector fields. In order to better understand
the motivation behind our work, let us be more specific about one of the results
proved in \cite{MPSC}. For the sake of simplicity, we state it in the more specific
context of Carnot groups. We refer to Section \ref{section3} for a detailed account of 
all the definitions needed in the following
\begin{theoMPSC}[\cite{MPSC},\textrm{Theorem} 3.12]
Let $\Omega\subset\mathbb{G}$ be a bounded open set.
Let $p \in (1,+\infty)$ and let $\mathcal{A}$ the class of all open subsets of $\Omega$. 
Let $F:L^p(\Omega)\times\mathcal{A}\to [0,+\infty]$ 
be an increasing functional satisfying the following properties:
\begin{itemize}
	\item $F$ is local;
	\item $F$ is a measure;
	\item $F$ is convex and lower semicontinuous;
	\item $F(u+c,A)=\,F(u,A)$ for each $u\in L^p(\Omega)$, $A\in\mathcal{A}$ and $c\in\mathbb{R}$;
	\item there exist a nonnegative function $a \in L^{1}_{\mathrm{loc}}(\Omega)$ 
	and a positive constant $b\in\mathbb{R}$ such that
	\[
	0\le\,F(u,A)\le\,\int_A\left(a(x)+b\,|\nabla_{\G}u(x)|^p\right)\,dx
	\]
	for each $u\in C^{1}(A)$, $A\in\mathcal{A}$.
\end{itemize}
Then there exists a Borel function $f:\mathbb{R}^m\to [0,+\infty]$ such that
\begin{itemize}
  \item for a.e. $x \in \Omega$, $f(x,\cdot)$ is convex;
	\item  for each $u\in L^p(\Omega)$, for each $A\in\mathcal{A}$ with $u|_A\in W^{1,p}_{\G, \mathrm{loc}}(A)$ we have
	\[
	F(u,A)=\int_A f(x,\nabla_{\G}u(x))\,dx\,;
	\]
	\item for a.e. $x\in \Omega$, 
	$$0\le f(\eta)\le a(x)+\, b \, |\eta|^p\ \forall\,\eta\in\mathbb{R}^m.$$
\end{itemize}
\end{theoMPSC}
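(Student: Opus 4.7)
The plan is to adapt the classical Buttazzo--Dal Maso integral representation scheme to the sub-Riemannian setting of Carnot groups. The candidate density $f$ is constructed by a Radon--Nikodym derivation of the functional evaluated on horizontally linear test functions. Concretely, for each $\eta \in \R^m$ I would introduce $\ell_\eta : \G \to \R$ by $\ell_\eta(x) := \langle \eta, \pi_1(x)\rangle$, where $\pi_1 : \G \to \R^m$ is the projection onto the first stratum in exponential coordinates; this gives $\nabla_{\G} \ell_\eta \equiv \eta$ on $\G$. The measure property together with the upper bound makes $A \mapsto F(\ell_\eta, A)$ a finite positive Borel measure on relatively compact open subsets of $\Omega$, absolutely continuous with respect to Lebesgue measure. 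One then sets
\[
\widetilde f(x, \eta) := \frac{d F(\ell_\eta, \cdot)}{d \mathcal{L}^n}(x),
\]
so that $0 \le \widetilde f(x, \eta) \le a(x) + b|\eta|^p$ for a.e.\ $x$ by construction.

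Next, I would verify convexity and Borel measurability. Using convexity and lower semicontinuity of $F$, the invariance under additive constants, and a truncation/localization argument, convexity of $\widetilde f(x, \cdot)$ can first be established on a countable dense set $D \subset \R^m$ for a.e.\ $x$; taking the lower semicontinuous convex envelope in $\eta$ then produces a Borel function $f(x, \eta)$, convex in its second argument, coinciding with $\widetilde f$ on a full-measure set, and retaining the pointwise growth bound.

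The heart of the proof is establishing $F(u, A) = \int_A f(x, \nabla_\G u(x))\,dx$ for $u \in C^1(A)$, via a freezing argument. One partitions $A$, up to a set of arbitrarily small measure, into small balls $B_{r_i}(x_i)$ on which $\nabla_\G u$ is close to the constant value $\nabla_\G u(x_i)$ and on which $u$ is close, modulo an additive constant $c_i$, to $\ell_{\nabla_\G u(x_i)}$. Locality, the measure property, the invariance $F(u+c, A) = F(u, A)$, and convexity then permit one to compare $F(u, B_{r_i}(x_i))$ with $F(\ell_{\nabla_\G u(x_i)}, B_{r_i}(x_i))$, with a defect controlled by the $p$-growth bound. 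Summing over $i$ and letting the mesh vanish yields the representation. Extension from $C^1(A)$ to $u \in L^p(\Omega)$ with $u|_A \in W^{1,p}_{\G, \loc}(A)$ is then obtained by density of smooth functions in $W^{1,p}_\G$, combined with lower semicontinuity of $F$ and a dominated convergence argument based on the upper bound on $f$.

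The main obstacle lies in the freezing step. In a Carnot group, the first-order Taylor expansion of $u \in C^1$ along horizontal directions controls $u - \ell_{\nabla_\G u(x_i)}$ only up to higher-order terms with anisotropic homogeneous scaling, so the simple isotropic estimates that suffice in the Euclidean case are unavailable. The resulting errors must be absorbed using the convexity of $F$ together with the $p$-growth bound, which in turn requires a careful balance between the radius $r_i$ of the partition and the modulus of continuity of $\nabla_\G u$. This is where the stratified structure of $\G$ enters the argument in a nontrivial way.
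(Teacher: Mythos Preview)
The paper does not contain a proof of this statement: it is quoted from \cite{MPSC} (their Theorem~3.12) in the introduction and then invoked as a black box inside the proof of Theorem~\ref{Th1}. There is therefore no proof in the present paper against which to compare your proposal.

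For what it is worth, your outline is the standard Buttazzo--Dal Maso scheme and is consistent with how the result is used here: the horizontally linear functions $\ell_\eta$ with $\nabla_{\G}\ell_\eta\equiv\eta$ are exactly the functions $u_\xi$ that the present paper introduces in \eqref{uxi}, and the Radon--Nikodym derivation of $f(x,\eta)$ from $A\mapsto F(\ell_\eta,A)$ matches the way $f$ is recovered via Lebesgue differentiation in the proof of Theorem~\ref{Th1}. Your identification of the freezing step and the anisotropic Taylor expansion as the nontrivial sub-Riemannian ingredient is also on target; this is precisely where the argument in \cite{MPSC} departs from the Euclidean case. A full verification of your sketch, however, would require consulting \cite{MPSC} directly rather than this paper.
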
 
We stress that the above result actually hold for a far more general class of vector fields,
not necessarily related to a Carnot group structure, see \cite{MPSC, MPSC2} for more details.

\medskip
The aim of this note is twofold: on the one hand, we are interested in proving that
under the extra condition of being {\it left--invariant}, see Definition \ref{DefLIF}, 
the functional is obviously still represented
by an integral, {\it but} the integrand does not depend 
anymore on the point, but only on the (intrinsic) gradient. \\
The second goal is to prove that the {\it left--invariant} condition allows to represent the functional on a wider class
of functions, namely $W^{1,1}_{\G,\mathrm{loc}}$, and not only on $W^{1,p}_{\G,\mathrm{loc}}$ for $p>1$, 
see Definition \ref{DefSobolevSpace}.
This is actually not really a surprise, because this is precisely
what happens in the classical (Euclidean) case 
when dealing with {\it translation invariant functionals}, see e.g. \cite[Chapter 23]{DM}. 
Nevertheless, the above mentioned results cannot be directly applied in our case, indeed it is not
difficult to produce examples of functionals which are left--invariant 
(w.r.t a Carnot group structure) but not translation invariant
in the Euclidean sense.
We also want to stress that one of the key ingredients to get the representation over $W^{1,1}_{\G,\mathrm{loc}}$
is provided by the use of the local convolution. This tool is far more delicate in
the context of Carnot groups and it has been recently introduced and deeply studied in \cite{CM}.\\

We are now ready to state the main result of this paper, which
is a representation theorem for local left--invariant functionals.
\begin{theorem}\label{Th1}
Let $p \in [1,+\infty)$ and let $\mathcal{A}_0$ the class of all bounded open subsets of $\G$. Let $F:L_{\mathrm{loc}}^p(\G)\times\mathcal{A}_0\to [0,+\infty]$ be an increasing functional satisfying the following properties:
\begin{itemize}
	\item[(a)] $F$ is local and left-invariant;
	\item[(b)] $F$ is a measure;
	\item[(c)] $F$ is convex and lower semicontinuous;
	\item[(d)] $F(u+c,A)=\,F(u,A)$ for each $u\in L_{\mathrm{loc}}^p(\G)$, $A\in\mathcal{A}_0$ and $c\in\mathbb{R}$;
	\item[(e)] there exist $a,b\in\mathbb{R}^+$ such that
	\[
	0\le\,F(u,A)\le\,\int_A\left(a+b\,|\nabla_{\G}u(x)|^p\right)\,dx
	\]
	for each $u\in W_{\G, \mathrm{loc}}^{1,1}(A)$, $A\in\mathcal{A}_0$.
\end{itemize}
Then there exists a convex function $f:\mathbb{R}^m\to [0,+\infty]$ such that
\begin{itemize}
	\item[(i)] for each $u\in L_{\mathrm{loc}}^p(\G)$, 
	for each $A\in\mathcal{A}_0$ with $u|_A\in W^{1,1}_{\G, \mathrm{loc}}(A)$ we have
	\[
	F(u,A)=\int_A f(\nabla_{\G}u(x))\,dx\,;
	\]
	\item[(ii)]	$$0\le f(\eta)\le a+\, b\, |\eta|^p\ \forall\,\eta\in\mathbb{R}^m.$$
\end{itemize}
\end{theorem}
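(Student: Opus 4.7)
My plan is to proceed in three stages that successively (i) invoke the MPSC theorem locally to obtain an integral representation of $F$ on $W^{1,p}_{\G,\mathrm{loc}}$ with a Borel integrand $f(x,\eta)$, (ii) use left-invariance together with property (d) to remove the $x$-dependence of $f$, and (iii) extend the representation from $W^{1,p}_{\G,\mathrm{loc}}$ to $W^{1,1}_{\G,\mathrm{loc}}$ via the local group convolution of \cite{CM}.

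For stage (i), exhausting $\G$ by bounded open sets $\Omega_n \nearrow \G$ and applying the MPSC theorem on each $\Omega_n$ (the constant $a$ of assumption (e) fits the $L^1_{\mathrm{loc}}$ bound required there) yields Borel functions $f_n : \Omega_n \times \R^m \to [0,+\infty]$, convex in the second variable and of $p$-growth, representing $F$ on $W^{1,p}_{\G,\mathrm{loc}}(\Omega_n)$; by uniqueness on overlaps they glue into a global $f : \G \times \R^m \to [0,+\infty]$. For stage (ii), the Carnot group structure ensures that the horizontal coordinates satisfy $X_i x_j = \delta_{ij}$, so for each $\eta \in \R^m$ the affine function $u_\eta(x) := \sum_{j=1}^m \eta_j x_j$ is smooth with $\nabla_\G u_\eta \equiv \eta$, and since the first-stratum group law is vector addition, $\tau_g u_\eta - u_\eta$ is a constant. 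Combining left-invariance of $F$, property (d), and left-invariance of the Haar measure gives
\[
\int_A f(x,\eta)\,dx \;=\; F(u_\eta, A) \;=\; F(\tau_g u_\eta, gA) \;=\; F(u_\eta, gA) \;=\; \int_A f(g\cdot x, \eta)\,dx
\]
for every $A \in \mathcal{A}_0$, $g \in \G$, $\eta \in \R^m$. Lebesgue differentiation (on a countable dense set of $\eta$'s, extended to all $\eta$ via the continuity of the finite-valued convex function $f(x,\cdot)$) yields $f(x,\eta) = f(\eta)$ for a.e.\ $x$; the resulting convex $f : \R^m \to [0,+\infty)$ then satisfies (ii).

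For stage (iii), fix $u \in L^p_{\mathrm{loc}}(\G)$ with $u|_A \in W^{1,1}_{\G,\mathrm{loc}}(A)$ and let $u_\eps := u \ast \rho_\eps$ be the local group convolution of \cite{CM} against a smooth probability kernel. On $A' \subset\subset A$ and for small $\eps$, $u_\eps$ is smooth, hence in $W^{1,p}_{\G,\mathrm{loc}}(A')$; using that the horizontal vector fields are left-invariant and hence commute with $\ast$, one has $\nabla_\G u_\eps = (\nabla_\G u) \ast \rho_\eps$, giving $u_\eps \to u$ in $L^p(A')$ and $\nabla_\G u_\eps \to \nabla_\G u$ in $L^1(A')$. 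The representation from stages (i)--(ii) yields $F(u_\eps, A') = \int_{A'} f(\nabla_\G u_\eps)\,dx$, and a Jensen inequality $f(\nabla_\G u_\eps) \le f(\nabla_\G u) \ast \rho_\eps$ combined with Fatou gives $\int_{A'} f(\nabla_\G u_\eps)\,dx \to \int_{A'} f(\nabla_\G u)\,dx$. The $L^p$ lower semicontinuity of $F$ then delivers the upper bound $F(u, A') \le \int_{A'} f(\nabla_\G u)\,dx$, extended to $A$ by inner regularity of the measure $F(u,\cdot)$.

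The matching lower bound $F(u,A) \ge \int_A f(\nabla_\G u)\,dx$ is the main obstacle: the lower semicontinuity of $F$ along the mollification $u_\eps$ produces exactly the wrong direction. My strategy is to compare the two absolutely continuous Borel measures $F(u,\cdot)$ (absolute continuity from the upper bound just established) and $\tilde F(u,\cdot) := \int_\cdot f(\nabla_\G u)\,dx$ through their Radon--Nikodym densities, replacing $u$ at a Lebesgue point $x_0$ of $\nabla_\G u$ (with value $\eta_0$) by the translated affine test function $v(x) := u(x_0) + u_{\eta_0}(x_0^{-1}\cdot x)$, for which $F(v, B_r(x_0)) = f(\eta_0)|B_r(x_0)|$ by stage (ii), and exploiting locality, property (d), convexity of $F$, and the smallness of $u - v$ around $x_0$ to secure the matching density bound $\liminf_{r\to0} F(u, B_r(x_0))/|B_r(x_0)| \ge f(\eta_0)$.
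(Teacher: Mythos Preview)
Your stages (i) and (ii) match the paper's argument, and your route to the inequality $F(u,A)\le\int_A f(\nabla_\G u)\,dx$ (via lower semicontinuity of $F$ along $u_\eps\to u$ in $L^p$, the smooth representation, and Jensen applied to $f$) is also essentially the paper's, modulo the harmless slip of writing $A'$ where $A$ should appear after the Jensen step.

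The genuine gap is your argument for the reverse inequality $F(u,A)\ge\int_A f(\nabla_\G u)\,dx$. The blow-up/density proposal is not justified: the tools you list---locality, property~(d), convexity of $F$, and smallness of $u-v$ near $x_0$---do not assemble into the claimed density bound. Convexity of $F$ produces upper bounds along convex combinations, not lower bounds along perturbations; there is no assumed dilation invariance of $F$, so a rescaling/blow-up mechanism has nothing to act on; and ``smallness of $u-v$'' is only available in $W^{1,1}$, whereas $F$ is lower semicontinuous only in $L^p$, so the two do not communicate directly.

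The key idea you are missing---and which makes this direction no harder than the other---is to apply Jensen's inequality to the functional $F$ itself rather than to $f$. Writing the mollification as a Bochner average in $L^p(A')$ of left translates,
\[
u_\eps(x)=\int_{B(0,\eps)}\tau_y u(x)\,\varphi_\eps(y)\,dy,
\]
and using that $F(\cdot,A')$ is convex and lower semicontinuous on $L^p_{\mathrm{loc}}(\G)$, the Banach-space Jensen inequality gives
\[
F(u_\eps,A')\le\int_{B(0,\eps)}F(\tau_y u,A')\,\varphi_\eps(y)\,dy
=\int_{B(0,\eps)}F(u,\tau_{y^{-1}}A')\,\varphi_\eps(y)\,dy\le F(u,A),
\]
the middle equality by left-invariance and the last inequality because $\tau_{y^{-1}}A'\subset A$ whenever $|y|_\G<\mathrm{dist}^{\mathcal R}(A',\G\setminus A)$. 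This Carbone--Sbordone type estimate, combined with Fatou and the smooth representation, yields
\[
\int_{A'}f(\nabla_\G u)\,dx\le\liminf_{\eps\to0}\int_{A'}f(\nabla_\G u_\eps)\,dx=\liminf_{\eps\to0} F(u_\eps,A')\le F(u,A),
\]
and the supremum over $A'\Subset A$ closes the proof.
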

\noindent We refer once again to Section \ref{section3} for details.\\
We believe that Theorem \ref{Th1} has its own interest, nevertheless we can immediately
apply it, in combination with other results proved in \cite{MPSC}, to get
a $\Gamma$--compactness result for left--invariant
functionals. Essentially, this says that, up to subsequences, 
the $\Gamma$-limit of a sequence of left--invariant functionals
exists and it is a left--invariant functional as well.
We refer to the nowadays classical texts \cite{Bra,DM} for an introduction to $\Gamma$-convergence.

\begin{corollary}\label{Th2}
Let $p \in (1,+\infty)$ and let $\mathcal{A}_0$ be the class of all open bounded subsets of $\G$. 
Then, for every sequence $\{F_h\}_{h\in\mathbb{N}}$ of increasing functionals $F_h:L_{\mathrm{loc}}^p(\G)\times\mathcal{A}_0 \to [0,+\infty]$ satisfying hypothesis $(a)-(e)$ of Theorem \ref{Th1} for every $h\in\mathbb{N}$, up to a subsequence, there exists a local left--invariant functional 
$F:L_{\mathrm{loc}}^p(\G)\times\mathcal{A}_0 \to [0,+\infty]$ such that 
\begin{equation*}
F(\cdot,A)=\Gamma-\lim_{h\to +\infty} F_h(\cdot,A)\quad\text{for each }A\in\mathcal{A}_0
\end{equation*}
\noindent in the $L^{p}_{\mathrm{loc}}(\G)$ topology.
Moreover, there exists a convex function $f:\mathbb{R}^m\to[0,+\infty]$ such that
\begin{itemize}
	\item[(i)] for each $u\in L_{\mathrm{loc}}^p(\G)$, 
	for each $A\in\mathcal{A}_0$ with $u|_A\in W^{1,p}_{\G, \mathrm{loc}}(A)$ we have
	\[
	F(u,A)=\int_A f(\nabla_{\G}u(x))\,dx\,;
	\]
	\item[(ii)]	$$0\le f(\eta)\le a+\, b\, |\eta|^p\ \forall\,\eta\in\mathbb{R}^m.$$
\end{itemize}
\end{corollary}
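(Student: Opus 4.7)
The plan is to combine the $\Gamma$-compactness machinery of \cite{MPSC} with Theorem \ref{Th1}. As a first step, I would invoke the $\Gamma$-compactness theorem proved in \cite{MPSC} (the Carnot group analogue of the classical Dal Maso--Modica result, cf.\ \cite{DM}) to extract a subsequence, still denoted $\{F_h\}$, and a functional $F:L^p_{\mathrm{loc}}(\G)\times\mathcal{A}_0\to[0,+\infty]$ such that $F(\cdot,A)=\Gamma\text{-}\lim_{h\to+\infty}F_h(\cdot,A)$ in the $L^p_{\mathrm{loc}}(\G)$ topology for every $A\in\mathcal{A}_0$. Standard $\Gamma$-convergence arguments then show that $F$ inherits from $\{F_h\}$ the structural properties (b)--(e) of Theorem \ref{Th1}: lower semicontinuity is built into every $\Gamma$-limit, convexity passes to $\Gamma$-limits of convex functionals, the invariance under addition of constants follows from the continuity of the translation $u\mapsto u+c$ on $L^p_{\mathrm{loc}}(\G)$, the upper bound (e) is obtained by testing the $\Gamma$-$\limsup$ inequality against the constant recovery sequence $u_h\equiv u$, and the local/measure structure is part of the compactness statement itself.

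The only genuinely new point is the preservation of left-invariance by the $\Gamma$-limit. Denote by $\tau_g u$ the left translate of a function $u$ by $g\in\G$. Since $\tau_g$ acts as an isometry of $L^p_{\mathrm{loc}}(\G)$ and maps $\mathcal{A}_0$ into itself, given $u\in L^p_{\mathrm{loc}}(\G)$ and $A\in\mathcal{A}_0$ I would pick a recovery sequence $u_h\to u$ with $F_h(u_h,A)\to F(u,A)$. Then $\tau_g u_h\to \tau_g u$ in $L^p_{\mathrm{loc}}(\G)$, and the left-invariance of each $F_h$ together with the $\Gamma$-$\liminf$ inequality for $F$ on $(\tau_g u,\tau_g A)$ yields
\[
F(\tau_g u,\tau_g A)\le \liminf_{h\to+\infty}F_h(\tau_g u_h,\tau_g A)=\liminf_{h\to+\infty}F_h(u_h,A)=F(u,A).
\]
The reverse inequality is obtained by the symmetric argument with $g$ replaced by $g^{-1}$, hence $F$ is left-invariant.

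Having verified (a)--(e) for $F$, an application of Theorem \ref{Th1} produces a convex function $f:\R^m\to[0,+\infty]$ satisfying the bound in (ii) and such that $F(u,A)=\int_A f(\nabla_\G u(x))\,dx$ whenever $u|_A\in W^{1,1}_{\G,\mathrm{loc}}(A)$; since for bounded $A$ and $p>1$ one has the inclusion $W^{1,p}_{\G,\mathrm{loc}}(A)\subseteq W^{1,1}_{\G,\mathrm{loc}}(A)$, this in particular gives conclusion (i). I expect the main obstacle to be the careful invocation of the compactness result of \cite{MPSC}, specifically the verification that it delivers $\Gamma$-convergence in the $L^p_{\mathrm{loc}}$ topology together with the transfer of the local/measure structure to the limit; once this is granted, the preservation of left-invariance is the transparent consequence of the isometric action of left-translation on $L^p_{\mathrm{loc}}(\G)$ sketched above.
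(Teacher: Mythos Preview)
Your argument is correct, but it proceeds in the reverse order from the paper. The paper first applies Theorem~\ref{Th1} to each $F_h$, obtaining integrands $f_h$ with uniform bounds $0\le f_h(\eta)\le a+b|\eta|^p$; it then invokes \cite[Theorem~4.20]{MPSC}, which for such sequences of integral functionals directly delivers both the $\Gamma$-compactness and the integral representation of the limit with a convex integrand $f$. Left-invariance of $F$ is then read off \emph{a posteriori} from this integral form via Proposition~\ref{prop 3} (Remark~\ref{prop 3p}). In contrast, you first pass to the $\Gamma$-limit, then verify hypotheses (a)--(e) for $F$ by hand---including left-invariance through the recovery sequence/$\liminf$ argument---and finally apply Theorem~\ref{Th1} to $F$ itself. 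Your route is slightly longer, since you must check that the $\Gamma$-limit is a measure and satisfies the growth bound rather than getting the integral form for free from \cite{MPSC}; on the other hand, your proof that left-invariance survives $\Gamma$-convergence is self-contained and does not rely on already knowing the integral representation, which is conceptually cleaner.
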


\medskip

A natural comparison with \cite{MPSC} is now in order. As already mentioned, the representation theorem 
proved in \cite{MPSC} does not require any Carnot group structure. 
Nevertheless, this setting seems to be quite necessary in 
order to be able to speak about some form of {\it invariance}, in this case with respect to
the group law. It would be interesting to study similar results in more general contexts, introducing
appropriate notions of invariance.\\
We also want to stress that several results concerning homogenization and $H$-convergence of operators
in Carnot groups are already available in the literature, see e.g. \cite{DDMM,FranGut,FranTesi,M}.\\

\medskip
\indent The structure of the paper is the following: in Section \ref{Prel} we provide
the basic necessary notions of Carnot groups and of local convolution within
Carnot groups. In Section \ref{section3} we introduce the class of left--invariant functionals.
Finally, Section \ref{section 4} is mainly devoted to the proof of the main results, i.e., Theorem \ref{Th1} 
and Corollary \ref{Th2}.

\section{Preliminaries}\label{Prel}

We start this section recalling the basic notions of Carnot groups.

A Carnot group $\mathbb{G}= (\R^n,\cdot)$ is a connected, simply connected 
and nilpotent Lie group, whose Lie algebra $\mathfrak{g}$ admits a stratification, 
namely 
there exist linear subspaces, usually called {\it layers}, such that
$$\mathfrak{g}=V_1\oplus..\oplus V_k, \quad [V_1,V_i]=V_{i+1}, \quad V_{k}\neq \{0\}, \quad V_i=\{0\} \, \textrm{if $i>k$},$$
\noindent where $k$ is usually called the {\it step} of the group $(\mathbb{G},\cdot)$ and
$$[V_i,V_j]:=\textrm{span}\left\{[X,Y]: \, X\in V_i,Y\in V_j\right\}.$$

The explicit expression of the group law $\cdot$ can be deduced from the Hausdorff-Campbell formula, see e.g. \cite{BLU}.
The group law can be used to define a diffeomorphism, usually called {\it left--translation} 
$\gamma_y : \G \to \G$ for every $y\in \G$, defined as
$$\gamma_y (x) := y \cdot x \quad \textrm{for every } x \in \G.$$
A Carnot group $\G$ is also endowed with a family of automorphisms of the group $\delta_\lambda:\G\to\G$, $\lambda\in\mathbb{R}^+$, called {\it dilations},
given by
\begin{equation*}
   \delta_\lambda(x_1,\ldots,x_n):=(\lambda^{d_1}x_1,..,\lambda^{d_n}x_n),
\end{equation*}
\noindent where $(x_1,\ldots, x_n)$ are the {\it exponential coordinates} of $x \in \G$,
$d_{j}\in \N$ for every $j=1,\ldots,n$ and $1=d_1=\ldots=d_m < d_{m+1} \leq \ldots \leq d_{n}$
for $m := {\rm dim}(V_1)$. Here the group $\G$ and the algebra $\mathfrak{g}$ are identified through the exponential mapping.
The $n$-dimensional Lebesgue measure $\mathcal{L}^{n}$ of $\mathbb{R}^n$ 
provides the Haar measure on $\G$, see e.g. \cite[Proposition 1.3.21]{BLU}.


It is customary to denote with $Q:=\sum_{i=1}^k i\ {\rm dim}(V_i)$ the homogeneous dimension of $\mathbb{G}$ 
which corresponds to the Hausdorff dimension of $\G$ (w.r.t. an appropriate sub--Riemannian distance, see below). 
This is generally greater than or equal to the topological dimension of $\G$ and it coincides with it
only when $\G$ is the Euclidean group $(\R^n,+)$, which is the only Abelian Carnot group.\\

Carnot groups are also naturally endowed with sub-Riemannian distances which make them interesting examples of metric spaces. 
A first well--known example of such metrics is provided by the Carnot-Carath\'eodory distance $d_{cc}$, see e.g. \cite[Definition 5.2.2]{BLU}, which
is a path--metric resembling the classical Riemannian distance.
In our case, we will work with metrics induced by homogeneous norms.
\begin{definition}
A homogeneous norm $|\cdot|_\G:\G\to\mathbb{R}^+_0$ is a continuous function with the following properties:
\begin{itemize}
    \item [$(i)$] $|x|_\G=0\ \textrm{if and only if } x=0$ for every $x\in\mathbb{G}$;
    \item [$(ii)$] $|x^{-1}|_\G=|x|_\G$ for every $x\in\mathbb{G}$;
    \item [$(iii)$] $|\delta_\lambda x|_\G=\lambda |x|_\G$ for every $\lambda\in\mathbb{R}^+$ and for every $x\in\mathbb{G}$.
\end{itemize}
\end{definition}
A homogeneous norm induces a left--invariant homogeneous distance by
\begin{equation*}
    d(x,y):=|y^{-1}\cdot x|_\G\quad\textrm{for every }x,y\in\G.
\end{equation*}
We remind that a generic distance $d$ is left--invariant if and only if $d(z\cdot x,z\cdot y)=d(x,y)$ for every $x,y,z\in\mathbb{G}$.
A concrete example of such kind of homogeneous distance is given by the Kor\'anyi distance, see e.g. \cite{Cygan}.\\

%
For our purposes, we are also interested in introducing a right--invariant distance
$d^{\mathcal{R}}$ given by
$$d^{\mathcal{R}}(x,y) := |x \cdot y^{-1}|_\G \quad\textrm{for every } x,y \in \mathbb{G}.$$
As before, $d^\mathcal{R}$ is right--invariant if and only if $d^\mathcal{R}(x\cdot z,y\cdot z)=d^\mathcal{R}(x,y)$ for every $x,y,z\in\mathbb{G}$.

From now on we will write $B(x,\eps)$ and $B^{\mathcal{R}}(x,\eps)$ 
to denote the balls of center $x\in\mathbb{G}$ and radius $\eps>0$ 
w.r.t the distances $d$ and $d^{\mathcal{R}}$ respectively.
We notice that for any $\eps>0$ $$B(0,\eps)=B^{\mathcal{R}}(0,\eps).$$
\medskip
We also define two {\it left--translation operators},
one acting on functions and the other one acting on sets, which will be relevant 
in the upcoming sections. 
\begin{definition}\label{TranslationOp}
Let $y \in \G$ be any point. We define $\tau_{y}:L_{\mathrm{loc}}^p(\G)\to L_{\mathrm{loc}}^p(\G)$ as
$$\tau_{y}u(x):=u(y^{-1}\cdot x) \quad \textrm{for every } x \in \G.$$
With an abuse of notation, we also define $\tau_{y}:\mathcal{A}_0 \to\mathcal{A}_0$ as
$$\tau_{y}A:=y\cdot A =\{x \in \G :y^{-1}\cdot x\in A\},$$
where $\mathcal{A}_0$ denotes the family of all
bounded open sets of $\G$.
\end{definition}

We now want to introduce the relevant Sobolev spaces needed in the rest of the paper. 
Let $u:\G \to \R$ be a sufficiently smooth function and let $(X_1,..,X_m)$ be a basis of the horizontal layer $V_1$, 
made of left-invariant vector fields, i.e., $X_j(\tau_y u)=\tau_y(X_j u)$ for any $j=1,\dots,m$ and for any $y\in\G$. Then the {\it horizontal gradient} $\nabla_{\G}u$ of $u:\G \to \R$ is given by
$$\nabla_\mathbb{G}u:=\sum_{j=1}^m(X_j u)X_j=(X_1u,..,X_m u).$$

\begin{definition}\label{DefSobolevSpace}
Let $\Omega \subset \G$ be an open set and let $1\leq p<+\infty$. 
The Sobolev space $W^{1,p}_{\mathbb{G}}(\Omega)$ is given by
$$W^{1,p}_{\mathbb{G}}(\Omega):=\left\{u\in L^p(\Omega): \nabla_\mathbb{G}u\in (L^ p(\Omega))^m\right\}.$$
Moreover,
$$W^{1,p}_{\mathbb{G},\mathrm{loc}}(\Omega) := \left\{ u \in W^{1,p}_{\mathbb{G}}(\Omega'), \textrm{for every open set } \Omega'\Subset \Omega \right\}.$$
\end{definition} 


\medskip

The next part of this section is devoted to the introduction and a brief recap of
the main properties of the {\it local convolution}
recently introduced in \cite{CM}. First, we need to recall
the notion of {\it smooth mollifier}.

\begin{definition}\label{def_moll}
Given a smooth compactly supported function $\varphi \in C^{\infty}_{0}(B(0,1))$, for $\eps >0$ 
we define
the family of functions
$\varphi_{\eps}:\mathbb{G} \to \R$ as
$$\varphi_{\eps}(x) := \tfrac{1}{\eps^{Q}} \varphi \left( \delta_{\eps^{-1}}x\right).$$
We say that $\{\varphi_{\eps}\}_\eps$ is a family of mollifiers if it satisfies the
following conditions:
\begin{itemize}
\item $\varphi_{\eps}\geq 0$ in $\mathbb{G}$, for all $\eps>0$;
\item $\mathrm{supp}(\varphi_{\eps})\subset B(0,\eps)$, for all $\eps>0$;
\item $\int_{B(0,\eps)} \varphi_{\eps}\,dx=1$, for all $\eps>0$.
\end{itemize}
\end{definition}

Following \cite{CM}, we move to a proper definition of local convolution.
Let $\Omega \subset \mathbb{G}$ be any open set. 
For every $\eps>0$ we can define
the open set
$$\Omega_{\eps}^{\mathcal{R}} := \left\{ x \in \mathbb{G}: \mathrm{dist}^{\mathcal{R}}(x,\G\setminus\Omega) > \eps \right\}$$
where
$$\mathrm{dist}^{\mathcal{R}}(x,\G \setminus \Omega)  := \inf \left\{ d^{\mathcal{R}}(x,y): y\in\G\setminus\Omega\right\}.$$
Let $\varphi$ be a smooth mollifier with support within $B(0,1)$. 
For any $u\in L^{1}_{\mathrm{loc}}(\Omega)$ and $x\in\mathbb{G}$, we can define the local convolution
\begin{equation*}
u_\eps(x):=(\varphi_{\eps} \ast u)(x) :=\int_{\Omega} \varphi_{\eps}(x\cdot y^{-1})u(y)\,dy.
\end{equation*}
If we restrict the domain of definition by considering $x\in\Omega_{\eps}^{\mathcal{R}}$, we can write
\begin{equation}\label{LocalConvolution}
\begin{aligned}
(\varphi_{\eps} \ast u)(x) &= \int_{B^{\mathcal{R}}(x,\eps)} \varphi_{\eps}(x\cdot y^{-1}) u(y)\,dy 
= \int_{B(0,\eps)}\varphi_{\eps}(y)u(y^{-1}\cdot x)\,dy\\
&= \int_{B(0,1)} \varphi(z) u\left( (\delta_{\eps}z)^{-1}\cdot x\right) \, dz.
\end{aligned}
\end{equation}
\noindent where we used that

for every $\eps>0$, $B^{\mathcal{R}}(0,\eps)=B(0,\eps)$. 
We are finally ready to state the natural counterparts of the classical 
results holding for the Euclidean convolution, see e.g. \cite{EG}.
We refer to \cite{FS} for the analogous
result when dealing with {\it global convolution} on $\G$.

\begin{proposition}\label{propconv}
Let $\Omega\subset\G$ be an open set and let $p\in [1,+\infty)$.
Let $u\in L^{p}_{\mathrm{loc}}(\Omega)$ and let $\{\varphi_{\eps}\}_\eps$
a family of mollifiers according to Definition \ref{def_moll}. 
Then
\begin{equation}\label{LplocConvergence}
\varphi_{\eps} \ast u \longrightarrow u \quad \textrm{(strongly) in } L^{p}_{\mathrm{loc}}(\Omega).
\end{equation}
Moreover, if $u\in W^{1,p}_{\G,\mathrm{loc}}(\Omega)$, then
\begin{equation}\label{SobLocConvergence}
\varphi_{\eps} \ast u \longrightarrow u \quad \textrm{(strongly) in } W^{1,p}_{\G,\mathrm{loc}}(\Omega).
\end{equation}
\begin{proof}
The proof of \eqref{LplocConvergence} follows from similar arguments of the classical Euclidean proof, see e.g. \cite[Theorem 4.1]{EG}.
We report it here for the sake of completeness.

Let $u\in L^p_{\mathrm{loc}}(\Omega)$ and let us pick a point $x\in V\Subset W\Subset\Omega$, with $V,W$ being open sets.
Since for $\varepsilon>0$ small enough $V\subset \Omega_{\eps}^{\mathcal{R}}$, we can exploit \eqref{LocalConvolution}.
We first prove an auxiliary estimate which holds true for $p \in (1,+\infty)$. In this case, let us
set $p'$ to be conjugate exponent of $p$, namely $\tfrac{1}{p} + \tfrac{1}{p'} =1$. We find
\begin{equation*}
\begin{aligned}
|u_\varepsilon (x)|&\leq \int_{B(0,1)}\varphi(z) \left|u\left( (\delta_{\eps}z)^{-1}\cdot x\right)\right| \, dz
= \int_{B(0,1)}\varphi(z)^{\tfrac{1}{p}} \varphi(z)^{\tfrac{1}{p'}} \left|u\left( (\delta_{\eps}z)^{-1}\cdot x\right)\right| \, dz\\
		&\leq \left(\int_{B(0,1)}\varphi(z)\, dx \right)^{\tfrac{1}{p'}} \left( \int_{B(0,1)} \varphi(z) \left|u\left( (\delta_{\eps}z)^{-1}\cdot x\right)\right|^{p} \, dz\right)^{\tfrac{1}{p}}\\
		&=\left( \int_{B(0,1)} \varphi(z) \left|u\left( (\delta_{\eps}z)^{-1}\cdot x\right)\right|^{p} \, dz\right)^{\tfrac{1}{p}}.
\end{aligned}
\end{equation*}		
Hence, and now for every $p\in [1,+\infty)$, we obtain that
\begin{align*}
\|u_\varepsilon\|_{L^p(V)}^p &\leq 
\int_{B(0,1)}\varphi(z) \left( \int_{V}\left|u\left( (\delta_{\eps}z)^{-1}\cdot x\right)\right|^{p}\,dx \right)\, dz\\
&\leq \int_{W}|u(y)|^{p}\, dy =  \|u\|_{L^p(W)}^p
\end{align*}
for $\eps >0$ sufficiently small.\\

\noindent Let us now fix $\delta>0$. Since $u\in L^p(W)$, there exists $v\in C(\overline{W})$ such that $\|u-v\|_{L^p(W)}\leq\delta$. Moreover, by the last estimate, $\|u_\varepsilon-v_\varepsilon\|_{L^p(V)}\leq\|u-v\|_{L^p(W)}\leq\delta$. Thus
\begin{align*}
    \|u_\varepsilon-u\|_{L^p(V)}\leq\|u_\varepsilon-v_\varepsilon\|_{L^p(V)}+\|v_\varepsilon-v\|_{L^p(V)}+\|v-u\|_{L^p(V)}\leq 3\delta
\end{align*}
since $v_\varepsilon$ converges uniformly to $v$ in any compact subset of $W$.

Let us now move to the proof of \eqref{SobLocConvergence}. Thanks to \eqref{LplocConvergence},
it is enough to prove that
$$X_j u_{\eps} = \varphi_{\eps} \ast X_j u \quad \textrm{in } \Omega_\varepsilon^\mathcal{R}$$
\noindent for every $j=1,\ldots,m$. 
Let us fix $x\in\Omega_\varepsilon^\mathcal{R}$. By the left-invariance of each vector field $X_j$, we get
\begin{align*}
X_j u_{\eps}(x)&=X_j \left(\int_{B(0,\eps)}\varphi_{\eps}(y)u(y^{-1}\cdot z)\, dy \right)\big|_{z=x}
=\int_{B(0,\eps)}X_j(\varphi_{\eps}(y)u(y^{-1}\cdot x))\, dy\\
&=\int_{B(0,\eps)}\varphi_\varepsilon(y)(X_j u)(y^{-1}\cdot x)\, dy=(\varphi_{\eps}\ast X_{j}u)(x)
\end{align*}
as desired.
\end{proof}
\end{proposition}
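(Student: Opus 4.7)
The plan is to adapt the classical Euclidean strategy (contractivity plus density, then differentiation under the integral) to the noncommutative group setting, relying throughout on the representation \eqref{LocalConvolution} and the left-invariance of the Haar measure on $\G$.

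For \eqref{LplocConvergence}, I would first establish a contractivity estimate $\|u_\eps\|_{L^p(V)} \leq \|u\|_{L^p(W)}$ for every pair of open sets $V \Subset W \Subset \Omega$ and every $\eps>0$ small enough that both $V \subset \Omega_\eps^{\mathcal{R}}$ and $(\delta_\eps z)^{-1}\cdot V \subset W$ for all $z \in B(0,1)$. Starting from the last line of \eqref{LocalConvolution}, Hölder's inequality applied in the $z$-variable (split $\varphi(z) = \varphi(z)^{1/p'}\varphi(z)^{1/p}$, use $\int \varphi = 1$) yields the pointwise bound $|u_\eps(x)|^p \leq \int_{B(0,1)} \varphi(z)\, |u((\delta_\eps z)^{-1}\cdot x)|^p\, dz$ when $p>1$; the same bound is immediate for $p=1$. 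Integrating over $V$, applying Fubini, and performing the volume-preserving change of variable $y = (\delta_\eps z)^{-1}\cdot x$ (legitimate because the $n$-dimensional Lebesgue measure is the Haar measure of $\G$) yields the contractivity. With this in hand, \eqref{LplocConvergence} follows by the standard density argument: approximate $u$ on $W$ by $v \in C(\overline W)$, use contractivity on $u-v$, and observe that $v_\eps \to v$ uniformly on $\overline V$ because $v$ is uniformly continuous on $\overline W$ and $\varphi_\eps$ is a probability density supported in $B(0,\eps)$.

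For \eqref{SobLocConvergence}, the key identity is $X_j u_\eps = \varphi_\eps \ast (X_j u)$ on $\Omega_\eps^{\mathcal{R}}$ for each horizontal vector field $X_j$. Writing $u_\eps(x) = \int_{B(0,\eps)} \varphi_\eps(y)\,(\tau_y u)(x)\,dy$ in the notation of Definition \ref{TranslationOp}, differentiating under the integral sign, and using the defining property $X_j(\tau_y u) = \tau_y(X_j u)$ of left-invariant vector fields, the identity drops out. Applying \eqref{LplocConvergence} to each $X_j u \in L^p_{\loc}(\Omega)$ then upgrades the convergence from $L^p_{\loc}$ to $W^{1,p}_{\G,\loc}$.

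The main obstacle is aligning the noncommutativity of $\G$ with the left-invariance of the frame $(X_1,\dots,X_m)$. The convolution is (and must be) defined on the \emph{right}-invariant thickening $\Omega_\eps^{\mathcal{R}}$, and the representation \eqref{LocalConvolution} involves the left translation $y^{-1}\cdot x$ of $u$; any alternative placement of the argument (e.g. $x\cdot y^{-1}$ inside $u$) would break the commutation with $X_j$ and spoil \eqref{SobLocConvergence}. The contractivity step is similarly nontrivial because one must invoke left-invariance of the Haar measure in the change of variables, rather than mere translation in $\R^n$; once this bookkeeping is set up correctly, the remaining steps mirror the Euclidean argument.
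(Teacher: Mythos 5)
Your proposal is correct and follows essentially the same route as the paper: the pointwise H\"older bound obtained by splitting $\varphi = \varphi^{1/p}\varphi^{1/p'}$, integration plus the measure-preserving change of variables to get contractivity $\|u_\eps\|_{L^p(V)}\leq\|u\|_{L^p(W)}$, the density argument with $v\in C(\overline W)$, and the commutation identity $X_j u_\eps = \varphi_\eps\ast X_j u$ via left-invariance of the frame. Your explicit remarks on the Haar-measure change of variables and on the smallness condition ensuring $(\delta_\eps z)^{-1}\cdot V\subset W$ make precise what the paper leaves implicit, but the argument is the same.
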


\medskip
We close this section recalling the following version of the Jensen Inequality in Banach spaces.
\begin{lemma}\label{Jensen Inequality}
Let $X$ be a Banach space and let $F:X\rightarrow[0,+\infty]$ be a lower semicontinuous convex function. 
Let $(E,\epsilon,\mu)$ be a measure space with $\mu\geq 0$ and $\mu(E)=1$. Then,
\begin{align}\label{Jensen}
F\left(\int_E u(s)\,d\mu(s)\right)\leq \int_E F(u(s))\,d\mu(s)
\end{align}
for every $\mu$--integrable function $u:E\to X$.
\end{lemma}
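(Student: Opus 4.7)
The plan is to reduce the Banach-space statement to the scalar case via duality, by representing $F$ as the pointwise supremum of its continuous affine minorants and then exchanging supremum with integration.

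First I would dispose of the trivial case in which $F\equiv+\infty$, in which both sides of \eqref{Jensen} equal $+\infty$. Assuming $F$ is proper, the key ingredient is the classical fact from convex analysis that any proper lower semicontinuous convex function $F:X\to(-\infty,+\infty]$ on a Banach space equals the pointwise supremum of its continuous affine minorants, i.e.
\begin{equation*}
F(x)=\sup\bigl\{\,\ell(x)+c \;:\; \ell\in X^{*},\; c\in\mathbb{R},\; \ell(y)+c\le F(y)\ \forall y\in X\,\bigr\}.
\end{equation*}
This is a standard consequence of the Hahn--Banach theorem applied to separate the (closed, convex) epigraph of $F$ from points strictly below the graph, and I would simply quote it.

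Next, fix one such continuous affine minorant $L(x)=\ell(x)+c$ with $\ell\in X^{*}$ and $c\in\mathbb{R}$, and let $v:=\int_E u(s)\,d\mu(s)\in X$ denote the Bochner integral, which exists because $u$ is $\mu$-integrable. By the defining property of the Bochner integral, continuous linear functionals commute with it, so
\begin{equation*}
L(v)=\ell(v)+c=\int_E \ell(u(s))\,d\mu(s)+c\cdot\mu(E)=\int_E L(u(s))\,d\mu(s),
\end{equation*}
using $\mu(E)=1$ in the last equality. Since $L\le F$ pointwise, and since $s\mapsto F(u(s))$ is $\mu$-measurable (composition of the lower semicontinuous $F$ with the strongly measurable $u$), the right-hand side is bounded above by $\int_E F(u(s))\,d\mu(s)$, whence
\begin{equation*}
L\!\left(\int_E u(s)\,d\mu(s)\right)\le \int_E F(u(s))\,d\mu(s).
\end{equation*}

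Finally I would take the supremum of the left-hand side over all continuous affine minorants $L$ of $F$; by the representation recalled in the first step, this supremum equals $F\bigl(\int_E u(s)\,d\mu(s)\bigr)$, which yields \eqref{Jensen}. The only delicate point in the argument is the Hahn--Banach representation of $F$ as a supremum of continuous affine functions; everything else is a routine application of linearity of the Bochner integral and monotonicity of the $\mu$-integral.
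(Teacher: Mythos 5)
Your argument is correct: the reduction to continuous affine minorants via Fenchel--Moreau/Hahn--Banach, the commutation of bounded linear functionals with the Bochner integral, and the final supremum are all sound, and the measurability of $s\mapsto F(u(s))$ is handled adequately. Note that the paper does not actually prove this lemma but defers to \cite[Lemma 23.2]{DM}, and the proof given there is essentially the same duality argument you propose, so your approach matches the cited one.
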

We refer to \cite[Lemma 23.2]{DM} for a proof.

\section{Left-invariant functionals}\label{section3}
In this section we introduce the object of our study.
We recall that $\mathcal{A}_0$ denotes the family of all
bounded open sets of $\G$ and, from now on, we consider $p\in[1,\infty)$.
First of all, let us recall few definitions already appeared in the introduction.
\begin{definition}
Let $\alpha:\,\mathcal{A}_0\to [0,\infty]$ be a set function.
We say that:
\begin{itemize}
\item[(i)] $\alpha$ is {\it increasing} if $\alpha(A)\le\, \alpha(B)$, for each $A,\,B\in\mathcal{A}_0$ with $A\subseteq B$;
\item[(ii)] $\alpha$ is {\it inner regular } if
\[
\alpha(A)=\,\sup\left\{\alpha(B):\,B\in\mathcal{A}_0,\,B\Subset A\right\}\,\text{for each}\ A\in\mathcal{A}_0;
\]
\item[(iii)] $\alpha$ is {\it subadditive }if $\alpha(A)\le\,\alpha(A_1)+\alpha(A_2)$ for every $A,\,A_1,\,A_2\in\mathcal{A}_0$ with $A\subset A_1\cup A_2$;
\item[(iv)] $\alpha$ is {\it superadditive }if $\alpha(A)\ge\,\alpha(A_1)+\alpha(A_2)$ for every $A,\,A_1,\,A_2\in\mathcal{A}_0$ with $A_1\cup A_2\subseteq A$ and $A_1\cap A_2=\emptyset$;
\item[(v)] $\alpha$ is a {\it measure }if there exists a Borel measure $\mu:\,\mathcal B(\G)\to [0,\infty]$ such that $\alpha(A)=\,\mu(A)$ for every $A\in\mathcal{A}_0$.
\end{itemize}
\end{definition}
\begin{remark}
Let us recall that, if $\alpha:\,\mathcal{A}_0\to [0,\infty]$ is an increasing set function, then it is a measure if and only if it is subadditive, superadditive and inner regular. For details see, for instance, \cite[Theorem 14.23]{DM}.
\end{remark}
\begin{definition}\label{set functions}
Let $F:\,L^p_{\mathrm{loc}}(\G)\times\mathcal{A}_0\to [0,+\infty]$ be a functional. We say that:
\begin{itemize}
	\item $F$ is {\it increasing} if, for every $u\in L^p(\G)$, $F(u,\cdot):\,\mathcal{A}_0\to [0,+\infty]$ is increasing as set function;
	\item $F$ is {\it inner regular} (on $\mathcal{A}_0$) if it is increasing and, for each $u\in L^p(\G)$, $F(u,\cdot):\,\mathcal{A}_0\to [0,+\infty]$ is inner regular as set function;
	\item $F$ is a {\it measure}, if for every $u\in L^p(\G)$, $F(u,\cdot):\,\mathcal{A}_0\to [0,+\infty]$ is a measure as set function ;
	\item $F$ is {\it local} if
	\[
	F(u,A)=\,F(v,A)\,
	\]
	for each $A\in\mathcal{A}_0$, $u,v\in L^p(\G)$ such that $u=v$ a.e. on $A$;
	\item $F$ is {\it lower semicontinuous} if, for every $A\in\mathcal{A}_0$, $F(\cdot,A):\,L^p(\G)\to [0,+\infty]$ is lower semicontinuous;
	\item $F$ is {\it convex} if, for every $A\in\mathcal{A}_0$, $F(\cdot,A):\,L^p(\G)\to [0,+\infty]$ is convex.
\end{itemize}
\end{definition}
\begin{remark}
Let $F:\,L^p_{\mathrm{loc}}(\G)\times\mathcal{A}_0\to [0,+\infty]$ be a non-negative increasing functional such that $F(u,\emptyset)=0$ for every $u\in L^p_{\mathrm{loc}}(\G)$. Then, by \cite[Theorem 14.23]{DM}, $F$ is a measure if and only if $F$ is subadditive, superadditive and inner regular.
\end{remark}
By means of the operators introduced in Definition \ref{TranslationOp}, we are ready to define
the class of {\it left--invariant functionals}. 

\begin{definition}\label{DefLIF}
Let $F:L^p_{\mathrm{loc}}(\G)\times\mathcal{A}_0\to\overline{\mathbb{R}}$
be a functional. 
$F$ is left--invariant if for every $y \in \G$
\begin{equation}\label{LIC}
F(\tau_{y}u,\tau_{y} A)=F(u,A)
\end{equation}
\noindent for every $u \in L^p_{\mathrm{loc}}(\G)$ and for every $A\in \mathcal{A}_0$.
\end{definition}

We stress that whenever $\G = (\R^{n},+)$, the above definition
boils down to the one considered in \cite[Chapter 23]{DM} and it is
therefore possible to provide many examples of translation--invariant functionals.\\
A less trivial example directly adapted to the Carnot group situation is provided
by the following functional
\begin{equation}\label{LIT}
F(u,A):=\left\{\begin{array}{rl}
\int_A f(\nabla_{\G} u(x))\, dx & \textrm{if } u\in W^{1,1}_{\G,\mathrm{loc}}(A),\\
+\infty & \textrm{otherwise},
\end{array}\right.
\end{equation}
\noindent where $f$ is a non-negative Borel function.
We remind that the functional $F$ defined above is increasing, subadditive, inner regular and, therefore, it is a measure. For details, see e.g. \cite[Example 15.4]{DM}.
\begin{proposition}\label{prop 3}
Let $F:\,L^p_{\mathrm{loc}}(\G)\times\mathcal{A}_0\to \overline{\mathbb{R}}$
be a functional as in \eqref{LIT}. Then, $F$ is left--invariant.
\end{proposition}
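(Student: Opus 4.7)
The plan is to reduce the identity $F(\tau_y u, \tau_y A) = F(u,A)$ to two ingredients: the left--invariance of the horizontal vector fields $X_1,\ldots,X_m$ (which is built into the very definition of $\nabla_{\G}$ in Section \ref{Prel}), and the fact that the $n$-dimensional Lebesgue measure is the Haar measure on $\G$, so it is invariant under the diffeomorphism $\gamma_y(x) = y\cdot x$. These two ingredients should combine to give the equality on the regular branch, and an easy transport argument will handle the $+\infty$ branch.

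First I would verify that $u\in W^{1,1}_{\G,\mathrm{loc}}(A)$ if and only if $\tau_y u \in W^{1,1}_{\G,\mathrm{loc}}(\tau_y A)$. Using the left--invariance $X_j(\tau_y u) = \tau_y(X_j u)$, this is equivalent to the fact that $\gamma_y$ is a diffeomorphism preserving Lebesgue measure, so it maps relatively compact sets in $A$ bijectively to relatively compact sets in $\tau_y A$ and preserves $L^1$ norms. This already shows that both sides of \eqref{LIC} equal $+\infty$ simultaneously outside the Sobolev branch, taking care of the trivial case.

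Next, on the Sobolev branch, the key computation is the pointwise identity
\[
\nabla_{\G}(\tau_y u)(x) = (\nabla_{\G} u)(y^{-1}\cdot x),
\]
which follows componentwise from $X_j(\tau_y u)(x) = \tau_y(X_j u)(x) = (X_j u)(y^{-1}\cdot x)$ for $j=1,\ldots,m$, i.e.\ exactly the left--invariance of the $X_j$'s recalled before Definition \ref{DefSobolevSpace}. Substituting into the definition of $F$ yields
\[
F(\tau_y u, \tau_y A) = \int_{\tau_y A} f\bigl((\nabla_{\G} u)(y^{-1}\cdot x)\bigr)\, dx.
\]
Then I would apply the change of variables $x = y\cdot z$, i.e.\ $z = y^{-1}\cdot x$. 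Since $\mathcal{L}^n$ is the Haar measure on $\G$ (see \cite[Proposition 1.3.21]{BLU}), left--translation has Jacobian identically equal to $1$, and since $\tau_y A = y\cdot A$, the domain of integration transforms back to $A$. The integral becomes $\int_A f(\nabla_{\G} u(z))\, dz = F(u,A)$, which is what we wanted.

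I do not expect any real obstacle here; the only subtlety worth spelling out is being careful that the two notions of left--invariance at play---invariance of the vector fields and invariance of the Haar measure---are both invoked, so that the change of variables is justified without any Jacobian factor. Everything else is a routine rewriting that identifies the two branches of the piecewise definition consistently under the action of $\tau_y$.
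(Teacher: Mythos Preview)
Your proof is correct and follows essentially the same approach as the paper: first reduce to the Sobolev branch by noting that $\tau_y u\in W^{1,1}_{\G,\mathrm{loc}}(\tau_y A)$ iff $u\in W^{1,1}_{\G,\mathrm{loc}}(A)$ via the left--invariance of the $X_j$'s, and then conclude by the change of variables $x=y\cdot z$ using that the Lebesgue measure is the Haar measure on $\G$. The paper's proof is slightly more terse, but the logical structure and key ingredients are identical.
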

\begin{proof}
First, we notice that, due to the left-invariance of the vector fields $X_1,\dots,X_m$, for any $u\in L_{\mathrm{loc}}^p(\G)$ and $A\in\mathcal{A}_0$
$$\tau_{y}u\in W_{\G,\mathrm{loc}}^{1,1}(\tau_yA) \ \textrm{if and only if } u\in W_{\G,\mathrm{loc}}^{1,1}(A).$$
Therefore, it is sufficient to prove the result for functions 
$u\in L_{\mathrm{loc}}^p(\G)$ such that $u|_A\in W_{\G,\mathrm{loc}}^{1,1}(A)$.

Let us fix $u\in L^p_{\mathrm{loc}}(\G)$ and $A\in\mathcal{A}_0$ such that $u|_A\in W^{1,1}_{\G,\mathrm{loc}}(A)$. 
By a change of variables, it follows that
\begin{equation}\label{1.3}
F(\tau_yu,\tau_yA)=\int_{\tau_yA}f(\nabla_{\G}\tau_y u(x))\, dx=\int_A f(\nabla_{\G}u(z))\, dz=F(u,A)
\end{equation}
\noindent as desired.
\end{proof}
\begin{remark}\label{prop 3p}
The previous result trivially holds if we replace $W^{1,1}_{\G,\mathrm{loc}}(A)$ with $W^{1,p}_{\G,\mathrm{loc}}(A)$ for $p>1$.
\end{remark}

We close this section by proving a couple of auxiliary results needed in the upcoming section.
The first one is the natural counterpart of a classical result of Carbone and Sbordone, see \cite{CS}.

\begin{theorem}\label{DMThm23.1ext}
Let $F:\,L^p_{\mathrm{loc}}(\G)\times\mathcal{A}_0\to\overline{\mathbb{R}}$ be a left--invariant, increasing, convex and lower semicontinuous functional and let $\{\varphi_{h}\}_{h\in \mathbb{N}}$ be a sequence of mollifiers, as in Definition \ref{def_moll}. Then
\begin{align*}
F(u,A')\leq\liminf_{h\to+\infty}F(\varphi_h*u,A')\leq\limsup_{h\to+\infty}F(\varphi_h*u,A')\leq F(u,A)
\end{align*}
for every $u\in L^p_{\mathrm{loc}}(\mathbb{G})$ and for every $A,\ A'\in\mathcal{A}_0$ with $A'\Subset A$.

\begin{proof}
The first inequality trivially follows from the lower semicontinuity of $F$, while
the second is always trivially satisified.

It remains to prove that
\begin{equation}\label{23.1}
\limsup_{h\to+\infty}F(u_h,A')\leq F(u,A),
\end{equation}
where $u_h:=\varphi_h*u$. To this aim, let us fix $u\in L^p_{\mathrm{loc}}(\mathbb{G})$, $A,A'\in\mathcal{A}_0$ such that $A'\Subset A$. Moreover, let $h\in\mathbb{N}$ be such that $\frac{1}{h}<\mathrm{dist}^{\mathcal{R}}(A',\mathbb{G}\setminus A)$ 
and let us define $B_h:=B(0,\frac{1}{h})$. We can notice that, for every $x\in A'$
\begin{equation}\label{conv}
u_h(x)=\int_{B_h}u(y^{-1}\cdot x)\varphi_h(y)\, dy=\int_{B_h}\tau_yu(x)\varphi_h(y)\, dy
\end{equation}
By \eqref{conv}, Lemma \ref{Jensen Inequality} and being $F$ left--invariant, we get
\begin{equation*}
\begin{aligned}
F(u_h,A')&=F\left(\int_{B_h}\tau_yu\ \varphi_h(y)\, dy,A'\right)\leq\int_{B_h}F(\tau_yu,A')\varphi_h(y)\, dy\\
&=\int_{B_h}F(u,\tau_{y^{-1}}A')\varphi_h(y)\, dy \leq\int_{B_h}F(u,A)\varphi_h(y)\, dy=F(u,A)
\end{aligned}
\end{equation*}
where the last inequality follows observing that $\tau_{y^{-1}}A'\subset A$ for each $y\in B_h$. 
Indeed, for any $x\in\tau_{y^{-1}}A'$, that is $y\cdot x\in A'$, if $x\in\mathbb{G}\setminus A$ we would have
\begin{equation*}
d^\mathcal{R}(x,y\cdot x)=|y|_{\G}<\frac{1}{h}<d^\mathcal{R}(A',\mathbb{G}\setminus A),
\end{equation*}
\noindent which is impossible.
Then, taking the $\limsup$ as $h \to +\infty$ we get \eqref{23.1}.
\end{proof}
\end{theorem}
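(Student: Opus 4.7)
The plan is to handle the three inequalities separately. The middle inequality is automatic, and the leftmost follows directly from Proposition \ref{propconv}, which gives $\varphi_h \ast u \to u$ strongly in $L^p_{\mathrm{loc}}(\G)$, combined with the lower semicontinuity of $F(\cdot, A')$ on $L^p(A')$. Thus the substantive content is the upper bound $\limsup_h F(u_h, A') \le F(u, A)$, where $u_h := \varphi_h \ast u$. This is where the left-invariance of $F$ must enter.

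My strategy for this upper bound is to represent $u_h$ on $A'$ as a weighted average of left-translates of $u$ and then apply Jensen's inequality. Fix $h$ large enough so that $1/h < \mathrm{dist}^{\mathcal{R}}(A', \G \setminus A)$, which by Proposition \ref{propconv} puts $A'$ inside the set where the formula \eqref{LocalConvolution} is valid. For every $x \in A'$ I rewrite
$$u_h(x) = \int_{B(0,1/h)} u(y^{-1} \cdot x)\, \varphi_h(y)\, dy = \int_{B(0,1/h)} (\tau_y u)(x)\, \varphi_h(y)\, dy,$$
which exhibits $u_h$ as the $L^p(A')$-valued Bochner integral of $y \mapsto \tau_y u$ against the probability measure $\varphi_h(y)\, dy$ supported on $B(0,1/h)$. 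Applying Lemma \ref{Jensen Inequality} to the convex lower semicontinuous functional $F(\cdot, A')$ then gives
$$F(u_h, A') \le \int_{B(0,1/h)} F(\tau_y u, A')\, \varphi_h(y)\, dy.$$

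At this point the left-invariance hypothesis \eqref{LIC} converts the integrand into $F(\tau_y u, A') = F(u, \tau_{y^{-1}} A')$. The key geometric step, which I expect to be the main technical point, is the containment $\tau_{y^{-1}} A' \subset A$ for every $y \in B(0,1/h)$. To see this, if $x \in \tau_{y^{-1}} A'$ then $y \cdot x \in A'$, whence $d^{\mathcal{R}}(x, y\cdot x) = |y|_{\G} < 1/h$; if additionally $x \notin A$ this contradicts the choice of $h$. It is essential here that the displacement produced by a left-translation is controlled by the \emph{right-invariant} distance $d^{\mathcal{R}}$ rather than by $d$, which is precisely why the paper bothers to introduce $d^{\mathcal{R}}$ and the associated notion of $\mathrm{dist}^{\mathcal{R}}$. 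Once the inclusion is in hand, monotonicity of $F$ and $\int \varphi_h = 1$ yield $F(u_h, A') \le F(u, A)$, and taking $\limsup$ in $h$ closes the argument.

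The only subtlety I foresee beyond the above is verifying that $y \mapsto \tau_y u$ is a suitably measurable, $\varphi_h(y)\,dy$-integrable $L^p(A')$-valued map, so that Lemma \ref{Jensen Inequality} genuinely applies; this should follow from the strong continuity of left-translations on $L^p_{\mathrm{loc}}(\G)$ together with the compact support of $\varphi_h$ and the fact that $\tau_y u$ remains in $L^p(A')$ uniformly for $y$ in the compact set $\overline{B(0,1/h)}$.
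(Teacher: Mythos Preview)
Your proposal is correct and follows essentially the same route as the paper: both treat the first inequality via lower semicontinuity (you are slightly more explicit in invoking Proposition~\ref{propconv} for the convergence $u_h\to u$), and both prove the crucial upper bound by writing $u_h$ as a $\varphi_h(y)\,dy$-average of translates $\tau_y u$, applying Jensen (Lemma~\ref{Jensen Inequality}), using left-invariance to pass to $F(u,\tau_{y^{-1}}A')$, and then the right-invariant distance argument to obtain $\tau_{y^{-1}}A'\subset A$. Your additional remark on the Bochner integrability of $y\mapsto\tau_y u$ is a welcome point of rigor that the paper leaves implicit.
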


The next result yields the lower semicontinuity of integral functionals of the form \eqref{LIT},
under appropriate assumptions on the integrand. See \cite{Serrin} for the Euclidean case.

\begin{theorem}\label{DMThm23.3ext}
Let $f:\mathbb{R}^m\rightarrow[0,+\infty]$ be a convex and lower semicontinuous function and let $A$ be an open subset of $\mathbb{G}$. 
Then, the functional $F:\,W^{1,1}_{\G,\mathrm{loc}}(A)\to\mathbb{R}$, defined as
$$F(u):=\int_A f(\nabla_{\G}u(x))\,dx$$
is lower semicontinuous on $W_{\G,\mathrm{loc}}^{1,1}(A)$ with respect to the topology induced by $L^1_{\mathrm{loc}}(A)$.
\end{theorem}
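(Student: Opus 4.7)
The plan is to run the classical Serrin-type argument, with the Euclidean convolution replaced by the local convolution of \cite{CM} and the Euclidean change of variables replaced by its left--invariant counterpart. First I would take a sequence $u_n\to u$ in $L^1_{\mathrm{loc}}(A)$ with $u_n,u\in W^{1,1}_{\G,\mathrm{loc}}(A)$ and reduce the problem to showing
$$\int_{A'}f(\nabla_{\G}u(x))\,dx\leq\liminf_{n\to+\infty}\int_A f(\nabla_{\G}u_n(x))\,dx$$
for an arbitrary open set $A'\Subset A$; the full inequality $F(u)\leq\liminf_n F(u_n)$ then follows by monotone convergence as $A'$ invades $A$.

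Next, I would fix an intermediate open set $A''$ with $A'\Subset A''\Subset A$ and, for $\eps>0$ small enough so that $\eps<\mathrm{dist}^{\mathcal{R}}(A',\G\setminus A'')$, define the mollifications $u_n^\eps:=\varphi_\eps\ast u_n$ and $u^\eps:=\varphi_\eps\ast u$. Arguing exactly as in the proof of Proposition \ref{propconv}, for every $x\in A'$ the identity
$$\nabla_{\G}u_n^\eps(x)=\int_{B(0,\eps)}\varphi_\eps(y)\,(\nabla_{\G}u_n)(y^{-1}\cdot x)\,dy$$
holds. Lemma \ref{Jensen Inequality} applied to the probability measure $d\mu(y):=\varphi_\eps(y)\,dy$, combined with Fubini, the left--invariance of the Haar measure on $\G$ and the inclusion $y^{-1}\cdot A'\subset A''$ for every $y\in B(0,\eps)$ (the same inclusion exploited at the end of the proof of Theorem \ref{DMThm23.1ext}), then yields
$$\int_{A'}f\bigl(\nabla_{\G}u_n^\eps(x)\bigr)\,dx\leq\int_{A''}f\bigl(\nabla_{\G}u_n(z)\bigr)\,dz\leq F(u_n).$$

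Finally, I would pass to the limit twice. First let $n\to+\infty$ with $\eps$ fixed: since $\varphi_\eps$ is smooth and $u_n\to u$ in $L^1_{\mathrm{loc}}(A)$, one obtains $\nabla_{\G}u_n^\eps\to\nabla_{\G}u^\eps$ uniformly on $A'$ (the horizontal derivatives can be moved onto the smooth kernel), so Fatou's lemma together with the lower semicontinuity of $f$ produces
$$\int_{A'}f\bigl(\nabla_{\G}u^\eps(x)\bigr)\,dx\leq\liminf_{n\to+\infty}F(u_n).$$
Then let $\eps\to 0^+$: by \eqref{SobLocConvergence} one has $\nabla_{\G}u^\eps\to\nabla_{\G}u$ in $L^1(A')$, hence almost everywhere along a subsequence, and a second invocation of Fatou and of the lower semicontinuity of $f$ closes the estimate on $A'$. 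The only delicate point I anticipate is keeping track of the non--commutativity of the group law when interchanging the horizontal gradient with the local convolution and when performing the change of variables $z=y^{-1}\cdot x$; both operations, however, are exactly those already justified in Proposition \ref{propconv} and in the proof of Theorem \ref{DMThm23.1ext}, so no genuinely new technical ingredient is required beyond the tools already assembled.
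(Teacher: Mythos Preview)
Your proposal is correct and follows essentially the same route as the paper's own proof: reduce to an arbitrary $A'\Subset A$, mollify both sides, use Jensen's inequality together with the left--invariant change of variables to control $\int_{A'}f(\nabla_{\G}u_n^\eps)$ by $F(u_n)$, pass to the limit first in $n$ via the uniform convergence $\nabla_{\G}u_n^\eps\to\nabla_{\G}u^\eps$ on $A'$, and then in $\eps$ via \eqref{SobLocConvergence} and Fatou. The only cosmetic difference is that you interpose an extra set $A''$ with $A'\Subset A''\Subset A$, whereas the paper works directly with $A'\Subset A$; this is harmless since $\int_{A''}\leq\int_A$ anyway.
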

\begin{proof}
Let us fix $A$ open subset of $\mathbb{G}$ and $u_h,u\in W^{1,1}_{\G,\mathrm{loc}}(A)$ 
such that $u_h\to u$ in $L^1_{\mathrm{loc}}(A)$. We just need to show that
\begin{equation}\label{lsc}
F(u)
\leq\liminf_{h\to+\infty}\int_A f(\nabla_{\G}u_h)\,dx.
\end{equation}
To this aim, let us fix $A'\Subset A$, $k\in\mathbb{N}$ such that $\frac{1}{k}<\mathrm{dist}^\mathcal{R}(A',\mathbb{G}\setminus A)$ and let us consider a sequence of mollifiers $\{\varphi_k\}_{k \in \N}$ as in Definition \ref{def_moll}. Moreover, let us denote $B_k:=B\left(0,\frac{1}{k}\right)$.
By Lemma \ref{Jensen Inequality} and Proposition \ref{prop 3}, we have
\begin{equation}\label{stima}
\begin{aligned}
\int_{A'}f(\nabla_{\G}(\varphi_k*u_h)(x))\,dx
&=\int_{A'} f((\varphi_k*\nabla_{\G}u_h)(x))\,dx\\
&=\int_{A'} f\left(\int_{B_k}\nabla_{\G}u_h(y^{-1}\cdot x)\varphi_k(y)\,dy\right)dx\\
&\leq\int_{A'}\left(\int_{B_k} f(\nabla_{\G}u_h(y^{-1}\cdot x))\varphi_k(y)\,dy\right)\,dx\\
&=\int_{B_k}\left(\int_{A'}f(\nabla_{\G}\tau_y u_h(x))\,dx\right)\varphi_k(y)\,dy\\ 
&=\int_{B_k}\left(\int_{\tau_{y^{-1}}A'}f(\nabla_{\G}u_h(x))\,dx\right)\varphi_k(y)\,dy\\
&\leq\int_{B_k}\left(\int_{A}f(\nabla_{\G}u_h(x))\,dx\right)\varphi_k(y)\,dy
=\int_{A}f(\nabla_{\G}u_h(x))\,dx
\end{aligned}
\end{equation}
\noindent where the last inequality follows from the same arguments used in the proof of Theorem \ref{DMThm23.1ext}.

Let us now show that
\begin{equation}\label{conv_infinito}
\varphi_k*u_h\to\varphi_k*u\ \text{in }C^\infty(\overline{A'}).
\end{equation}
Recalling that $u_h\to u\ \text{in }L^1_{\mathrm{loc}}(A)$ as $h\to +\infty$ by Proposition \ref{propconv} then, for each $\alpha,h\in\mathbb{N}$ and for every $x\in\overline{A'}$ and $j=1,..,m$, it holds that
\begin{align*}
\left|(X_j^\alpha(\varphi_k*u_h)-X_j^\alpha(\varphi_k*u))(x)\right|
&=\left|X_j^\alpha(\varphi_k*u_h-\varphi_k*u)(x)\right|\\
&=\left|X_j^\alpha\left(\int_{B^{\mathcal{R}}\left(x,\frac{1}{k}\right)}(u_h(y)-u(y))\varphi_k(x\cdot y^{-1})\,dy)\right)\right|\\
&=\left|\int_{B^{\mathcal{R}}\left(x,\frac{1}{k}\right)}(u_h(y)-u(y))X_j^\alpha\varphi_k(x\cdot y^{-1})\,dy)\right|\\
&\leq\int_{B^{\mathcal{R}}\left(x,\frac{1}{k}\right)}\left|u_h(y)-u(y)\right|\left|X_j^\alpha\varphi_k(x\cdot y^{-1})\right|\,dy\\
&\leq\|X_j^\alpha\varphi_k\|_{L^\infty(A)}\int_{A}\left|u_h(y)-u(y)\right|\,dy.
\end{align*}
Passing to the supremum in $\overline{A'}$ and taking the limit as $h\to +\infty$, we get \eqref{conv_infinito}.
As a consequence, the sequence $\{\nabla_{\G}(\varphi_k*u_h)\}_h$ uniformly converges to $\nabla_{\G}(\varphi_k*u)$ in $A'$, as $h\to+\infty$.

We can also notice that, by the lower semicontinuity of $f$, by \eqref{stima} and applying the Fatou's Lemma, then
\begin{equation}\label{Fatou}
\begin{aligned}
\int_{A'}f(\nabla_{\G}(\varphi_k*u))\,dx\leq\liminf_{h\to+\infty}\int_{A'}f(\nabla_{\G}(\varphi_k*u_h))\,dx\leq\liminf_{h\to+\infty}\int_{A}f(\nabla_{\G}u_h)\,dx.
\end{aligned}
\end{equation}
Moreover, being $\nabla_{\G}(\varphi_k*u)$ convergent to $\nabla_{\G}u$ in $L^1(A')$, in according with Proposition \ref{propconv}, we finally get, by the lower semicontinuity of $f$, the Fatou's Lemma and by \eqref{Fatou}
\begin{align*}
\int_{A'}f(\nabla_{\G}u)\,dx\leq\liminf_{k\to+\infty}\int_{A'}f(\nabla_{\G}(\varphi_k*u))\,dx\leq\liminf_{h\to+\infty}\int_{A}f(\nabla_{\G}u_h)\,dx.
\end{align*}
\end{proof}

We close this section recalling a definition which will be useful in the sequel. See, for instance, \cite[Chapter 15]{DM} for details.
\begin{definition}\label{envelope}
Let $X$ be a topological space and let $F:X\times\mathcal{A}_0\to\overline{\mathbb{R}}$ be an increasing functional, in according with Definition \ref{set functions}. We define the \it{inner regular envelope} of $F$ the increasing functional $F_{-}:X\times\mathcal{A}_0\to\overline{\mathbb{R}}$ defined as
\begin{align*}
F_{-}(x,A):=\sup\{F(x,B):B\in\mathcal{A}_0, B\Subset A\}
\end{align*}
for every $x\in X$ and for every $A\in\mathcal{A}_0$.

Moreover, we define the \it{lower semicontinuous envelope} of $F$ the functional $sc^{-}F:X\times\mathcal{A}_0\to\overline{\mathbb{R}}$, defined as
\begin{align*}
(sc^{-}F)(x,A):=\sup_{U\in\mathcal{N}(x)}\inf_{y\in U}F(y,A)
\end{align*}
for every $x\in X$ and for every $A\in\mathcal{A}_0$, where $\mathcal{N}(x)$ denotes the set of all open neighbourhoods of $x$ in $X$.
\end{definition}
\begin{remark}
If the functional $F$ is increasing and lower semicontinuous, then $F_{-}$ is also increasing, lower semicontinuous and inner regular. If $F$ is just increasing, then $sc^{-}F$ is still increasing and lower semicontinuous, but, in general, it is not inner regular, even if $F$ is inner regular. See for instance \cite[Example 15.11]{DM}.

Finally, named $\overline{F}:=(sc^{-}F)_{-}$ the inner regular envelope of the lower semicontinuous envelope of $F$, then $\overline{F}$ is the greatest increasing, inner regular and lower semicontinuous functional less than or equal to $F$.
\end{remark}


\section{Proof of Theorem \ref{Th1}}\label{section 4}

In this section we can finally prove the main results of the paper.

\begin{proof}[Proof of Theorem \ref{Th1}]
We start defining the auxiliary function $u_{\xi}:\G \to \R$ as
\begin{equation}\label{uxi}
u_{\xi}(x):= \scal{\xi}{\Pi(x)}_{\R^{m}} \quad \textrm{for every } \xi \in \R^{m},
\end{equation}
\noindent where $\Pi: \R^n \to \R^m$ denotes the projection over the horizontal layer $V_1$, here identified with
$\R^m$.\\
We note that $u_{\xi}$ is smooth and
\begin{equation}\label{GradUxi}
\nabla_{\G}u_{\xi}(x)= \xi \quad \textrm{for every } x \in \G.
\end{equation}
We also note that
\begin{equation}\label{ContoFacile}
\tau_{x}u_{\xi}(y) = u_{\xi}(x^{-1}\cdot y) = \scal{\xi}{\Pi(x^{-1}\cdot y)}_{\R^{m}} 
= \sum_{i=1}^{m}\xi_{i}(y_{i}- x_{i}) = u_{\xi}(y) - u_{\xi}(x)
\end{equation}
\noindent for every $x,y \in \G$.\\
Therefore, by the left-invariance of $F$, \eqref{ContoFacile} and by property $(d)$, we get 
\begin{equation}\label{InvarianceOfF}
\begin{aligned}
F(u_{\xi},B_{\rho}(0))&=F(u_{\xi}(y),B_{\rho}(0)) = F(\tau_{x}u_{\xi}(y),\tau_{x}B_{\rho}(0))\\
&=F(u_{\xi}(y)-c, B_{\rho}(x))=F(u_{\xi}, B_{\rho}(x))
\end{aligned}
\end{equation}
\noindent for every $x \in \G$. We stress that $c := u_{\xi}(x)$ is a constant w.r.t. $y$.

Now, by \cite[Theorem 3.12]{MPSC}, there exists a function $f:\G \times \R^m \to [0,+\infty]$ such that
$$F(u_{\xi},B_{\rho}(x)) = \int_{B_{\rho}(x)}f(z, \nabla_{\G}u_{\xi}(z))\, dz.$$
Since $|B_{\rho}(x)| = |B_{\rho}(0)|$ for every $x \in \G$, by \eqref{InvarianceOfF} and
Lebesgue's differentation Theorem, we have that, taking the limit as $\rho \to 0^{+}$,
$$f(0,\xi) \leftarrow \dfrac{1}{|B_{\rho}(0)|}\int_{B_{\rho}(0)}f(z, \nabla_{\G}u_{\xi}(z))\, dz =
\dfrac{1}{|B_{\rho}(x)|}\int_{B_{\rho}(x)}f(z, \nabla_{\G}u_{\xi}(z))\, dz \rightarrow f(x,\xi)$$
\noindent for every $x \in \G$.
Therefore, we can consider the well--defined function $f_0: \R^{m} \to [0,+\infty]$ given by
$$f_0(\xi):= f(0,\xi)\quad \textrm{for every } \xi \in \R^m.$$
Moreover, such function $f_0$ inherits all the properties of $f$ proved to hold in \cite[Theorem 3.12]{MPSC}.
Namely, $f_0$ is convex and 
$$0\leq f_0(\xi) \leq a + b |\xi|^{p} \quad \textrm{for every } \xi \in \R^m.$$
Moreover, 
\begin{equation}\label{RappresentazioneLisce}
F(u,A) = \int_{A} f_0(\nabla_{\G}u(x)) \, dx
\end{equation}
\noindent for every $u \in C^{\infty}(\G)$ and for every $A\in \mathcal{A}_0$.

It remains to show that the same representation \eqref{RappresentazioneLisce} holds for every
$u \in L^{p}_{\mathrm{loc}}(\G)$ with $u|_{A} \in W^{1,1}_{\G,\mathrm{loc}}(A)$ (for every $A \in \mathcal{A}_0$).\\
Let $A' \in \mathcal{A}_{0}$ such that $A' \Subset A$ and let $\{\varphi_{h}\}_h$ is a family of smooth mollifiers as in Definition \ref{def_moll} (here with $h=\tfrac{1}{\varepsilon}$). 
Hence, by \eqref{SobLocConvergence}, the Fatou's Lemma, the representation among smooth functions \eqref{RappresentazioneLisce}
and by Theorem \ref{DMThm23.1ext}, we get
\begin{equation*}
\begin{aligned}
\int_{A'}f_0(\nabla_{\G}u)\,dx  
\leq \liminf_{h\to +\infty} \int_{A'}f_{0}(\nabla_{\G}u_h)\, dx
=\liminf_{h\to+\infty} F(u_h, A') \leq F(u,A),
\end{aligned}
\end{equation*}
where $u_h:=\varphi_h \ast u$ for every $h\in\mathbb{N}$.\\
Therefore, by taking the supremum for $A'\Subset A$, we get
\begin{equation}\label{Ineq1}
\int_{A}f_0(\nabla_{\G}u(x))\,dx \leq F(u,A).
\end{equation}

We now proceed with the proof of the opposite inequality. First, we notice that, by Theorem \ref{DMThm23.3ext},
the functional $F$ is lower semicontinuous in $W^{1,1}_{\G,\mathrm{loc}}(A)$. Hence
\begin{equation}\label{Flsc}
F(u,A') \leq \liminf_{h\to +\infty}F(u_h, A').
\end{equation}
Now, as before, we denote by $B_{h}:= B\left(0, \tfrac{1}{h}\right)$ for every $h \in \mathbb{N}$.
Whenever $\tfrac{1}{h} < \mathrm{dist}^{\mathcal{R}}(A', \G \setminus A)$ then, by Lemma \ref{Jensen Inequality} and the left-invariance of $F$, it holds that
\begin{equation*}
\begin{aligned}
F(u_h, A') & = \int_{A'} f_{0} \left( \int_{B_h} \nabla_{\G}u(y^{-1}\cdot x)\varphi_{h}(y) \, dy \right) \, dx\\
&\leq \int_{A'}\left( \int_{B_h} f_{0}(\nabla_{\G}u(y^{-1}\cdot x))\varphi_{h}(y) \, dy \right) \, dx\\
&=\int_{B_h}\left( \int_{A'} f_{0}(\nabla_{\G}u(y^{-1}\cdot x))\, dx\right)\varphi_{h}(y) \, dy \\
&\leq\int_{B_h}\left( \int_{A} f_{0}(\nabla_{\G}u(x))\, dx\right)\varphi_{h}(y) \, dy \\
&=\int_{A}f_{0}(\nabla_{\G}u(x))\, dx.
\end{aligned}
\end{equation*}
We stress that the last inequality follows from the same argument used at the end of the proof of Theorem \ref{DMThm23.1ext}.\\
Combined with \eqref{Flsc}, this yields
$$F(u,A') \leq \int_{A}f_{0}(\nabla_{\G}u(x))\, dx,$$
\noindent which in turn gives
\begin{equation}\label{Ineq2}
F(u,A) \leq \int_{A}f_{0}(\nabla_{\G}u(x))\, dx,
\end{equation}
\noindent by passing to the supremum for $A'\Subset A$.
Taking into account \eqref{Ineq1} and \eqref{Ineq2}, we close the proof.
\end{proof}

\medskip

\indent As for the classical case, we can prove that left--invariant functionals are
uniquely determined on $L^{p}_{\mathrm{loc}}(\G)$ by their prescription on
a class of regular functions. Taking into account Definition \ref{envelope}, we preliminary need the following

\begin{theorem}\label{DM23.5}
Let $F:L^{p}_{\mathrm{loc}}(\G) \times \mathcal{A}_{0} \to [0,+\infty]$ be an increasing
functional satisfying the assumptions $(a)-(e)$ of Theorem \ref{Th1} and let $f:\R^{m} \to [0,+\infty]$
be as in Theorem \ref{Th1}. 
Let $\mathcal{F}:L^{p}_{\mathrm{loc}}(\G) \times \mathcal{A}_{0} \to [0,+\infty]$ be the
functional defined as
\begin{equation*}
\mathcal{F}(u,A):= \left\{\begin{array}{rl}
 \int_{A}f(\nabla_{\G}u(x))dx &\quad \textrm{if } u \in W^{1,1}_{\G,\mathrm{loc}}(A),\\							+\infty & \quad \textrm{otherwise}.					\end{array}\right.	
\end{equation*}
Let $\overline{\mathcal{F}}$ be the inner regular envelope of the lower semicontinuous envelope of $\mathcal{F}$.
Then,
\begin{equation}\label{Prima}
\overline{\mathcal{F}}(u,A) = \int_{A} f(\nabla_{\G}u(x))\, dx
\end{equation}
\noindent for every $A \in \mathcal{A}_{0}$ and for every $u \in L^{p}_{\mathrm{loc}}(\G)$ such that
$u|_{A} \in W^{1,1}_{\G,\mathrm{loc}}(A)$. \\
Moreover,
\begin{equation}\label{Seconda}
F(u,A) = \overline{\mathcal{F}}(u,A)
\end{equation}
\noindent for every $u\in L^{p}_{\mathrm{loc}}(\G)$ and for every $A \in \mathcal{A}_{0}$.
\begin{proof}
By Theorem \ref{DMThm23.3ext}, the functional $\mathcal{F}$ is lower semicontinuous on $W^{1,1}_{\G,\mathrm{loc}}(A)$ with the respect to the topology induced by $L^{1}_{\mathrm{loc}}(A)$.
Moreover, by Proposition \ref{prop 3}, $\mathcal{F}$ is also left--invariant. Finally, it is easy
to check that $\overline{\mathcal{F}}$ satisfies properties $(a)-(e)$ of Theorem \ref{Th1}. Therefore, 
\eqref{Prima} directly follows.

Concerning \eqref{Seconda}, we first recall that $\overline{\mathcal{F}}$ is an increasing, inner regular and
lower semicontinuous functional, which is also the greatest functional with these properties less
than or equal to $\mathcal{F}$.
Therefore, since $F(u,A) \leq \mathcal{F}(u,A)$ 
for every $A \in \mathcal{A}_0$ and for every $u \in L^{p}_{\mathrm{loc}}(\G)$, we get 
that
$$F(u,A) \leq \overline{\mathcal{F}}(u,A)$$
\noindent for every $A \in \mathcal{A}_0$ and for every $u \in L^{p}_{\mathrm{loc}}(\G)$.\\
\indent In order to complete the proof we need to show that the opposite inequality
holds true as well. To this aim, let us consider $u\in L^{p}_{\mathrm{loc}}(\G)$
and $A \in \mathcal{A}_0$. We then consider $A'\in\mathcal{A}_0$ such that $A' \Subset A$ and
a sequence of mollifiers $\{\varphi_{h}\}_{h\in \mathbb{N}}$ as in Definition \ref{def_moll}.\\
Since $u_h:=\varphi_{h} \ast u$ is smooth, then, by Theorem \ref{Th1} (see in particular \eqref{RappresentazioneLisce}),
we have
$$\mathcal{F}(u_h, A') = F(u_h, A')$$
\noindent for every $h\in \mathbb{N}$. Now, the lower semicontinuity of $\overline{\mathcal{F}}$, implies that
$$\overline{\mathcal{F}}(u,A') \leq \liminf_{h\to +\infty}\mathcal{F}(u_h, A')
\leq \limsup_{h\to +\infty} F(u_h, A') \leq F(u,A)$$
by Theorem \ref{DMThm23.1ext}.
Since $\overline{\mathcal{F}}$ is inner regular, then, taking the supremum among sets $A' \Subset A$, we finally get
$$\overline{\mathcal{F}}(u,A) \leq F(u,A)$$
\noindent for every $A \in \mathcal{A}_0$ and for every $u \in L^{p}_{\mathrm{loc}}(\G)$.

\end{proof}
\end{theorem}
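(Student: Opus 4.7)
The plan is to establish \eqref{Prima} and \eqref{Seconda} in sequence, exploiting the measure character of $\mathcal{F}$ on its effective domain together with the maximality characterization of $\overline{\mathcal{F}}$. The bridge connecting $F$ and $\overline{\mathcal{F}}$ will be the class of smooth functions, where Theorem \ref{Th1} (in the form \eqref{RappresentazioneLisce}) forces $F = \mathcal{F}$, combined with mollification as the mechanism for transferring information from smooth functions to general elements of $L^p_{\mathrm{loc}}(\G)$.

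For \eqref{Prima}, I would observe that on the class $\{u : u|_A \in W^{1,1}_{\G,\mathrm{loc}}(A)\}$ the functional $\mathcal{F}$ is already lower semicontinuous in the $L^p_{\mathrm{loc}}$ topology. Indeed, Theorem \ref{DMThm23.3ext} yields lower semicontinuity with respect to $L^1_{\mathrm{loc}}(A)$, hence a fortiori with respect to $L^p_{\mathrm{loc}}(\G)$ restricted to $A$; sequences $u_h \to u$ with $u_h|_A \notin W^{1,1}_{\G,\mathrm{loc}}(A)$ trivially satisfy $\mathcal{F}(u_h, A) = +\infty$. Moreover, $\mathcal{F}$ is inner regular on this class since $B \mapsto \int_B f(\nabla_\G u)\,dx$ is a Borel measure. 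Therefore $sc^{-}\mathcal{F} = \mathcal{F}$ on such $u$, and monotone convergence gives that the inner regular envelope also equals $\mathcal{F}$, which yields $\overline{\mathcal{F}}(u,A) = \int_A f(\nabla_\G u)\,dx$.

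For the inequality $F \leq \overline{\mathcal{F}}$ in \eqref{Seconda}, I would invoke the characterization of $\overline{\mathcal{F}}$ as the greatest increasing, inner regular and lower semicontinuous functional dominated by $\mathcal{F}$ (see the remark after Definition \ref{envelope}). Theorem \ref{Th1} directly gives $F(u,A) = \int_A f(\nabla_\G u)\,dx = \mathcal{F}(u,A)$ whenever $u|_A \in W^{1,1}_{\G,\mathrm{loc}}(A)$, and $F(u,A) \leq +\infty = \mathcal{F}(u,A)$ otherwise, so $F \leq \mathcal{F}$ globally. Hypotheses $(a)$--$(c)$ of Theorem \ref{Th1} make $F$ increasing, inner regular (being a measure by $(b)$) and lower semicontinuous, so maximality forces $F \leq \overline{\mathcal{F}}$.

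The main obstacle is the reverse inequality $\overline{\mathcal{F}} \leq F$, which I would handle via a mollification-plus-inner-regularity argument. Fix $u \in L^p_{\mathrm{loc}}(\G)$, $A \in \mathcal{A}_0$ and any $A' \Subset A$; set $u_h := \varphi_h \ast u$ for mollifiers as in Definition \ref{def_moll}. By Proposition \ref{propconv}, $u_h$ is smooth and $u_h \to u$ in $L^p_{\mathrm{loc}}(\G)$. For $h$ large enough, the smooth-function representation \eqref{RappresentazioneLisce} combined with \eqref{Prima} gives
\begin{equation*}
\overline{\mathcal{F}}(u_h, A') = \mathcal{F}(u_h, A') = \int_{A'} f(\nabla_\G u_h)\,dx = F(u_h, A').
\end{equation*}
Combining the lower semicontinuity of $\overline{\mathcal{F}}$ with Theorem \ref{DMThm23.1ext} applied to $F$ on the pair $A' \Subset A$, I get
\begin{equation*}
\overline{\mathcal{F}}(u, A') \leq \liminf_{h \to \infty} \overline{\mathcal{F}}(u_h, A') = \liminf_{h \to \infty} F(u_h, A') \leq F(u, A).
\end{equation*}
Taking the supremum over $A' \Subset A$ and using the inner regularity of $\overline{\mathcal{F}}$ gives $\overline{\mathcal{F}}(u, A) \leq F(u, A)$. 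The delicate point is that strict inclusion $A' \Subset A$ is essential for Theorem \ref{DMThm23.1ext} to apply, and it is the inner regularity of $\overline{\mathcal{F}}$ (rather than of $\mathcal{F}$, which fails in general outside the Sobolev class) that closes the loop.
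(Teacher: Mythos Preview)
Your argument is correct, and for \eqref{Seconda} it is essentially identical to the paper's: $F \leq \mathcal{F}$ together with the maximality characterization of $\overline{\mathcal{F}}$ gives $F \leq \overline{\mathcal{F}}$; for the reverse inequality one mollifies, uses $\mathcal{F}(u_h,A') = F(u_h,A')$ on smooth functions via \eqref{RappresentazioneLisce}, applies lower semicontinuity of $\overline{\mathcal{F}}$ and Theorem \ref{DMThm23.1ext}, and closes with inner regularity of $\overline{\mathcal{F}}$ by taking the supremum over $A' \Subset A$.

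For \eqref{Prima} there is a small but genuine difference in route. The paper does not argue directly that the envelope operations leave $\mathcal{F}$ unchanged on the Sobolev class; instead it checks that $\overline{\mathcal{F}}$ itself satisfies hypotheses $(a)$--$(e)$ of Theorem \ref{Th1} (left--invariance via Proposition \ref{prop 3}, lower semicontinuity via Theorem \ref{DMThm23.3ext}, and so on) and then re-applies Theorem \ref{Th1} to $\overline{\mathcal{F}}$, the integrand being forced to equal $f$ since $\overline{\mathcal{F}}$ coincides with $\mathcal{F}$ on the affine functions $u_\xi$. Your approach is more direct and more economical: you simply observe that on $\{u : u|_A \in W^{1,1}_{\G,\mathrm{loc}}(A)\}$ the functional $\mathcal{F}(\cdot,A)$ is already lower semicontinuous (Theorem \ref{DMThm23.3ext}, plus the trivial case $\mathcal{F}(u_h,A)=+\infty$) and $\mathcal{F}(u,\cdot)$ is already inner regular (being an integral), so that $\overline{\mathcal{F}}=\mathcal{F}$ there. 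This spares you the verification of $(a)$--$(e)$ for $\overline{\mathcal{F}}$; the paper's detour, on the other hand, has the side benefit of confirming that $\overline{\mathcal{F}}$ lies in the same class of functionals as $F$.
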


As a direct consequence, we can finally prove 

\begin{theorem}
Let $F,G: L^{p}_{\mathrm{loc}}(\G) \times \mathcal{A}_{0} \to [0,+\infty]$ be two increasing functionals
satisfying $(a)-(e)$ of Theorem \ref{Th1}.
Let $\emptyset \neq A_0 \in \mathcal{A}_0$ and, for every $\xi \in \R^m$, let $u_{\xi}:\G \to \R$ be defined as in \eqref{uxi}. Moreover, assume that
\begin{equation}\label{FequalG}
F(u_{\xi}, A_0) = G(u_{\xi},A_0) \quad \textrm{for every } \xi \in \R^m.
\end{equation}
Then, $F=G$ on $L^{p}_{\mathrm{loc}}(\G) \times \mathcal{A}_{0}$.
\begin{proof}
By Theorem \ref{Th1}, we know the existence of two convex functions
$f,g : \R^m \to [0,+\infty]$ such that
$$F(u,A) = \int_{A}f(\nabla_{\G}u(x))\, dx \quad \textrm{and } \quad G(u,A) = \int_{A}g(\nabla_{\G}u(x))\, dx$$
\noindent for every $A \in \mathcal{A}_0$ and for every $u \in L^{p}_{\mathrm{loc}}(\G)$ such
that $u|_{A} \in W^{1,1}_{\G,\mathrm{loc}}(A)$.\\
Moreover, by \eqref{FequalG} and \eqref{GradUxi}, we get
$$f(\xi) |A_{0}| = F(u_{\xi},A_0) = G(u_{\xi},A_0) = g(\xi) |A_0|$$
\noindent for every $\xi \in \R^{m}$. Applying Theorem \ref{DM23.5} we get the desired conclusion. 
\end{proof}
\end{theorem}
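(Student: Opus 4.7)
The plan is to reduce the identification of $F$ and $G$ on the whole space $L^p_{\mathrm{loc}}(\G) \times \mathcal{A}_0$ to a pointwise identification of their integrands, and then bootstrap back via the envelope representation provided by Theorem \ref{DM23.5}. First, I would invoke Theorem \ref{Th1} to produce two convex functions $f, g : \R^m \to [0, +\infty]$ such that $F(u,A) = \int_A f(\nabla_\G u)\, dx$ and $G(u,A) = \int_A g(\nabla_\G u)\, dx$ whenever $u|_A \in W^{1,1}_{\G,\mathrm{loc}}(A)$. This is the standard starting point, made available by the preceding work.

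Next, I would use the special test functions $u_\xi$ from \eqref{uxi}. Since $u_\xi \in C^\infty(\G) \subset W^{1,1}_{\G,\mathrm{loc}}(A_0)$ and $\nabla_\G u_\xi \equiv \xi$ by \eqref{GradUxi}, the integral representations above evaluated on $A_0$ give
\[
F(u_\xi, A_0) = f(\xi)\,|A_0| \quad \textrm{and} \quad G(u_\xi, A_0) = g(\xi)\,|A_0|.
\]
The hypothesis \eqref{FequalG} together with $|A_0| > 0$ (since $A_0 \neq \emptyset$ is open) then forces $f(\xi) = g(\xi)$ for every $\xi \in \R^m$. This is the crucial consequence of testing against the linear functions: it converts equality on a single one-parameter family of test data into full pointwise equality of the integrands.

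With $f \equiv g$, the auxiliary functionals $\mathcal{F}$ associated to $F$ and to $G$ via Theorem \ref{DM23.5} literally coincide as functionals on $L^p_{\mathrm{loc}}(\G) \times \mathcal{A}_0$. Consequently, their inner regular envelopes of their lower semicontinuous envelopes $\overline{\mathcal{F}}$ also coincide. Applying \eqref{Seconda} of Theorem \ref{DM23.5} to both $F$ and $G$ yields
\[
F(u,A) = \overline{\mathcal{F}}(u,A) = G(u,A)
\]
for every $u \in L^p_{\mathrm{loc}}(\G)$ and every $A \in \mathcal{A}_0$, which is the desired conclusion.

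I do not expect any serious obstacle here: the heavy lifting has already been done in Theorem \ref{Th1} (representation on $W^{1,1}_{\G,\mathrm{loc}}$ with a point-independent integrand) and in Theorem \ref{DM23.5} (extension of the representation to all of $L^p_{\mathrm{loc}}(\G)$ by the envelope). The only subtle point is to remember that the representation $F(u,A) = \int_A f(\nabla_\G u)\, dx$ is not a priori valid for arbitrary $u \in L^p_{\mathrm{loc}}(\G)$, which is precisely why one must pass through $\overline{\mathcal{F}}$ instead of arguing directly from the integral formulas; otherwise the argument is a clean two-line deduction from equality of the integrands on the dense set of linear functions $\{u_\xi : \xi \in \R^m\}$.
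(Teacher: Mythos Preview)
Your proposal is correct and follows essentially the same approach as the paper: invoke Theorem \ref{Th1} to obtain the integrands $f$ and $g$, test on $u_\xi$ to deduce $f\equiv g$, and then appeal to Theorem \ref{DM23.5} to conclude $F=G$ on all of $L^p_{\mathrm{loc}}(\G)\times\mathcal{A}_0$. Your write-up merely unpacks the final step (coincidence of the $\mathcal{F}$'s and hence of the $\overline{\mathcal{F}}$'s) a bit more explicitly than the paper does.
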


We close the section with the 
\begin{proof}[Proof of Corollary \ref{Th2}]
By Theorem \ref{Th1}, there exists a sequence of convex functions $f_h:\mathbb{R}^m\to [0,+\infty]$ such that
$$F_h(u,A)=\int_A f_h(\nabla_{\G}u(x))\,dx$$
\noindent for every $h\in\mathbb{N}$ for every $A\in\mathcal{A}_0$ and 
for every $u\in L_{\mathrm{loc}}^p(\G)$ such that $u|_A\in W^{1,1}_{\G, \mathrm{loc}}(A)$.\\
Moreover, since the bounds on $F_h$ are uniform, we have that
	$$0\le f_h(\eta)\le a+\, b\, |\eta|^p$$
\noindent for every $\eta\in\mathbb{R}^m$ and for every $h\in\mathbb{N}$.

Now, by \cite[Theorem 4.20]{MPSC}, up to a subsequence, there exist 
a local functional $F:L^p_{\mathrm{loc}}(\G)\times\mathcal{A}_0 \to [0,+\infty]$ 
and a convex function $f:\mathbb{R}^m\to [0,+\infty]$ such that 
\begin{itemize}
\item[(i)]\begin{equation*}
0\leq f(\eta)\le a+b|\eta|^p \quad \textrm{for every } \eta\in\mathbb{R}^m;
\end{equation*}
\item[(ii)]
  \begin{equation*}
    F(\cdot,A)=\Gamma-\lim_{h\to+\infty} F_h(\cdot,A)\quad\textrm{for every }A\in\mathcal{A}_0\,;
    \end{equation*}
		\item[(iii)]
$F$ admits the following representation \begin{equation}\label{rep}
    F(u,A)=
    \displaystyle{\begin{cases}
    \int_{A}f(\nabla_\G u(x))dx&\text{ if }A\in\mathcal{A}_0,u|_{A}\in W^{1,p}_{\G,\mathrm{loc}} (A)\\
    +\infty&\text{ otherwise}
    \end{cases}\,.}
    \end{equation}
\end{itemize}
Finally, by Remark \ref{prop 3p} we can infer that the $\Gamma$-limit $F$ is
left--invariant.
\end{proof}
\begin{remark}
We remind that Corollary \ref{Th2} cannot be expected to hold in general even for $p=1$, since this is false already in the Euclidean case. We refer to \cite[Example 3.14]{DM} for more details.
\end{remark}

\section*{Acknowledgements}
The authors would like to thank the anonymous referee for the suggestions that improved the paper.
The authors would also like to thank Prof. Francesco Serra Cassano and Prof. Andrea Pinamonti of the Department of Mathematics of the University of Trento, for their precious support and help.


\begin{thebibliography}{99}
\bibitem{Alberti}
{\sc G. Alberti}, 
{\em Integral representation of local functionals}, 
Ann. Mat. Pura Appl. {\bf 165}(4), (1993), 49--86.

\bibitem{BLU}
{\sc A. Bonfiglioli, E. Lanconelli and F. Uguzzoni}, 
{\em Stratified Lie Groups and Potential Theory for Their Sub-Laplacians}, 
Springer, 2007.

\bibitem{Bra}
{\sc A. Braides}, 
{\em $\Gamma$-convergence for beginners}, 
Oxford University Press, Oxford, 2002, 22, xii+218.

\bibitem{B}
{\sc G. Buttazzo}, 
{\em Semicontinuity, relaxation and integral rapresentation}, Longman, Harlow, 1989.

\bibitem{BDM}
{\sc G. Buttazzo and G. Dal Maso}, 
{\em On Nemyckii operators and integral representation of local functionals},
Rend. Mat. {\bf 3}(7), (1983), 491--509. 

\bibitem{BDM2}
{\sc G. Buttazzo and G. Dal Maso}, 
{\em Integral representation and relaxation of local functionals},
Nonlinear Anal. {\bf 9}(6), (1985), 515--532.

\bibitem{CS}
{\sc L. Carbone and C. Sbordone},
{\em Some properties of $\Gamma$-limits of integral functionals}, 
Ann. Mat. Рига Appl. {\bf 122}(4), (1979), 1--60.


\bibitem{CM}
{\sc G.E. Comi and V. Magnani}, 
{\em The Gauss-Green Theorem in Stratified Groups}, 
Adv. in Math. {\bf 360}, (2020), 106916.

\bibitem{Cygan}
{\sc J. Cygan}, 
{\em Subadditivity of homogeneous norms on certain nilpotent Lie groups}, 
Proc. Amer. Math. Soc. {\bf 83}, (1981), 69--70.

\bibitem{DM}
{\sc G. Dal Maso}, 
{\em An Introduction to $\Gamma$-convergence}, 
Birhk\"auser, Boston, 1993.

\bibitem{DDMM}
{\sc N. Dirr, F. Dragoni, P. Mannucci and C. Marchi}
{\em $\Gamma$-convergence and homogenisation for a class of
degenerate functionals}, 
Nonlinear Anal. {\bf 190}, (2020), 111618.

\bibitem{EG}
{\sc L.C. Evans and R.F. Gariepy}, 
{\em Measure theory and fine properties of functions}, Studies in Advanced Mathematics (5), CRC Press, 1991.


\bibitem{FS}{\sc G.B. Folland and  E.M. Stein}, 
{\em Hardy spaces on homogeneous groups}, 
Princeton University
Press, Princeton, 1982.

\bibitem{FranGut}
{\sc B. Franchi, C.E. Guti\'errez and T. van Nguyen}, 
{\em Homogenisation and convergence of correctors in Carnot groups}, 
Comm. Partial Differential Equations {\bf 30} (2005), 1817--1841.

\bibitem{FranTesi}
{\sc B. Franchi and M.C. Tesi}, 
{\em Two-scale homogenisation in the Heisenberg group},
J. Math. Pures Appl. {\bf 81}(9),  (2002), 495--532.

\bibitem{M}
{\sc A. Maione}, {\em $H$-convergence for equations depending on monotone operators in Carnot groups}, 
preprint, available at: \url{https://arxiv.org/pdf/1906.09252.pdf}.

\bibitem{MPSC}
{\sc A. Maione, A. Pinamonti and F. Serra Cassano},
{\em $\Gamma$-convergence for functionals depending on vector fields I. Integral representation
and compactness}, preprint, available at: \url{https://arxiv.org/pdf/1904.06454.pdf}.

\bibitem{MPSC2}
{\sc A. Maione, A. Pinamonti and F. Serra Cassano},
{\em $\Gamma$-convergence for functionals depending on vector fields II. Convergence of minimizers},
forthcoming.

\bibitem{Serrin}
{\sc J. Serrin},
{\em On the definition and properties of certain variational integrals}, 
Trans. Amer. Math. Soc. {\bf 101}, (1961), 139--167.
 


\end{thebibliography}
\end{document}